\def\ps@pprintTitle{%
	\let\@oddhead\@empty
	\let\@evenhead\@empty
	\def\@oddfoot{\centerline{\thepage}}%
	\let\@evenfoot\@oddfoot}
\setlist{label={$$\roman{enumi}\kern1pt$)$}}
\newtheorem{thm}{Theorem}[section]
\newtheorem{prop}[thm]{Proposition}
\newtheorem{cor}[thm]{Corollary}
\newtheorem*{cor*}{Corollary}
\newtheorem{lema}[thm]{Lemma}
\newtheorem*{lema*}{Lemma}
\numberwithin{equation}{section}
\theoremstyle{definition}
\newtheorem*{Def}{Definition}
\newtheorem{obs}{Remark}
\newcommand{\PI}[2]{\left\langle \,#1 , #2\, \right\rangle}
\newcommand{\K}[2]{\left[\,#1 , #2\, \right]}
\newcommand{\St}{\mathcal{S}}
\newcommand{\HH}{\mathcal{H}}
\newcommand{\KK}{\mathcal{K}}
\newcommand{\M}{\mathcal{M}}
\newcommand{\N}{\mathcal{N}}
\newcommand{\RR}{\mathbb{R}}
\newcommand{\mc}[1]{\mathcal{#1}}
\newcommand{\T}{\mathcal{T}}
\newcommand{\ol}{\overline}
\newcommand{\clran}{\ol{\mathrm{ran}}\,}
\newcommand{\ra}{\rightarrow}
\newcommand{\perpi}{[\perp]}
\DeclareMathOperator{\ran}{ran}
\DeclareMathOperator{\tr}{tr}
\DeclareMathOperator{\esp}{spl}
\DeclareMathOperator{\real}{Re} 
\DeclareMathOperator{\im}{Im}
\begin{document}
\title{Global solutions of approximation problems in Krein spaces}

%


\author{
	\name{M.~Contino\textsuperscript{a,b, c}
		\thanks{CONTACT M.~Contino. Email: mcontino@fi.uba.ar}}
	\affil{
		\textsuperscript {a} Depto. de An\'alisis Matem\'atico, Facultad de Matem\'aticas, Universidad Complutense de Madrid. Plaza de Ciencias 3 (28040), Madrid, Spain. \\ 
		\textsuperscript {b} Instituto Argentino de Matem\'atica ``Alberto Calder\'on'' - CONICET. Saavedra 15, piso 3 (1083), Ciudad Aut\'onoma de Buenos Aires, Argentina.\\  
		\textsuperscript {c} Facultad de Ingenier\'ia - Universidad de Buenos Aires. Paseo Col\'on 850 (1063), Ciudad Aut\'onoma de Buenos Aires, Argentina.
	}
}


%
%
%
\maketitle


\begin{abstract} 
Three approximation problems in Krein spaces are studied, namely the indefinite weighted least squares problem and the related problems of indefinite abstract splines and smoothing. In every case, we analyze 
the existence of a linear and continuous operator that maps each data point to its solution and when the associated operator problem considering the $J$-trace has a solution. 
\end{abstract}

\begin{keywords}Indefinite least square problems; indefinite optimal inverses; Krein spaces.
\end{keywords}

\section{Introduction}
\label{sec:intro}

In \cite{Contino3g}, inspired by \cite{[CorFonMae16]}, with some arguments taken from \cite{Contino}, we studied three well-known approximation and interpolation problems in Hilbert spaces: the weighted least squares problem and its associated abstract splines and smoothing problems. 
We gave conditions under which these minimization problems have a bounded global solution. This means that we not only give conditions that ensured the existence of solutions for every point in a Hilbert space but also guaranteed the existence of an operator that assigns to each point a solution in a linear and continuous way. Additionally, we delved into the operator versions of these problems, considering the $p$-Schatten norm and we established connections between the existence of solutions for these problems and the existence of bounded global solutions.

In this work we are interested in extending these results for approximation and interpolation problems to the Krein spaces setting. We consider the indefinite weighted least squares problem and the indefinite abstract splines and smoothing problems. 

The indefinite weighted least squares problems in Krein spaces were studied by Hassibi et al.  \cite{HassibipartI} in their ``pointwise'' form. These problem were also studied  in  \cite{GiribetKrein} for linear operators on infinite-dimensional spaces and in \cite{Hassibietal,HassibipartII}, for matrices with complex entries.

Let $(\HH, \K{\cdot}{\cdot})$ and $(\KK, \K{\cdot}{\cdot})$ be separable complex Krein spaces, denote by $L(\KK, \HH)$ the set of bounded linear operators from $\KK$ to $\HH,$ and set $L(\HH):=L(\HH,\HH).$  Given $A\in L(\KK, \HH)$, $W\in L(\HH)$ selfadjoint and $x \in \HH,$ an \emph{indefinite weighted least squares solution} of the equation $Az = x$  is a vector $u\in \KK$ such that
\begin{equation}
\label{WLSPI}
\K{W(Au-x)}{Au-x} \leq \K{W(Az-x)}{Az-x} \textrm{ for every }z \in \KK.
\end{equation}


Some aspects of the indefinite metric spaces theory have been applied to adaptive filter theory \cite{Hassibietal2, HassibipartII}. Also, the introduction of Krein spaces in $\HH^{\infty}$ estimation and control techniques has enabled the adaptation of traditional tools of control theory to $\HH^{\infty}$ control problems \cite{Hassibietal3}. Also, a problem related to weighted indefinite least squares problems is the \emph{linear state estimation in $\HH^{\infty}$ spaces}, see  \cite[Section 1]{GiribetKrein}.

In this work, we focus on establishing conditions for the existence of global solutions of \eqref{WLSPI}. 
For a positive (semidefinite) weight $W,$ the notion of $W$-inverse was introduced by Mitra and Rao in the case of matrices \cite{Mitra}, and later on extended to Hilbert space operators in \cite{[CorFonMae13], Contino}. An operator $G \in L(\HH, \KK)$ is an indefinite $W$-inverse if for each $x \in \HH$, $Gx$ is an indefinite weighted least squares solution of $Az=x.$ 
That is, $G$ is a bounded global solution of \eqref{WLSPI}.  In this work, necessary and sufficient conditions for the existence of such a solution are given.

In the Hilbert space setting an associated minimizing problem can be considered in the context of unitarily invariant norms, in particular, the $p$-Schatten class norms 
$\| \cdot \|_p,$ for $1 \leq p <\infty,$ \cite{Ringrose}. If $W \in L(\HH)$ is positive (semidefinite) such that $W^{1/2}$ is in the $p$-Schatten class $S_p$ for some $1 \leq p <\infty,$ the problem of studying the existence of $W$-inverses is related to the operator Procrustes problem: $$\underset{X \in L(\HH)}{\min} \ \|W^{1/2}(BX-C)\|_p,$$ see \cite{Contino}.

Inspired by the work of Kintzel on an indefinite Procrustes problem expressed as a max-min problem on traces of matrices \cite{Ulric}, in \cite{Contino33}, we defined a $J$-trace, $\tr_J$, for some signature operator $J$ of $\HH.$ 
Here, we study the corresponding  \textit{indefinite operator weighted least squares problem}: given $A\in L(\KK, \HH)$ (not necessarily with closed range) and $W\in L(\HH)$ selfadjoint such that  $W\in S_1,$ analyze the existence of
\begin{equation}
\label{eqpI}
	\min_{X\in L(\HH, \KK)} \tr_J ((AX-I)^{\#}W(AX-I)),
\end{equation}
Problem \eqref{eqpI} was studied in \cite{Contino33} when $A \in L(\HH)$ is a  closed range operator. 
In a similar fashion we study the indefinite spline and smoothing problems. A Hilbert space formulation of abstract splines and smoothing problems was introduced by Atteia \cite{[Att65]} and extended by several authors: see for example Anselone and Laurent \cite{[AnsLau68]} 
and the surveys by Champion, Lenard and Mills \cite{[ChaLenMil96],[ChaLenMil00]}. The abstract spline and smoothing problems were generalized to bounded linear operators in \cite{Spline} and have been applied in many areas, such as approximation theory, numerical analysis and statistics, among others \cite{Holladay}, 
spline functions in indefinite metric spaces have already been studied in \cite{Canu,Loosli} to solve numerical aspects related to learning theory problems. 

Consider  $T, V \in L(\HH).$ The \emph{indefinite spline problem} can be stated as follows: given  $f_0 \in \ran V$ (the range of $V$), analyze if there exists
$$\min\K{Tx}{Tx}, \mbox{ subject to } Vx=f_0. $$  
The \textit{indefinite smoothing problem}: given $h_0\in \HH$ and $\rho \in \RR \setminus\{0\},$ study whether there exists
$$\underset{x\in \HH}{\min}(\K{Tx}{Tx}+\rho \K{Vx-h_0}{Vx-h_0}).
$$

The advantage of the above regularized problem is that it can be restated as an indefinite least-squares problem. These problems have been thoroughly studied before, both in finite-dimensional spaces \cite{Hassibietal,HassibipartI,HassibipartII} and in infinite dimensional Krein spaces \cite{Bognar,GiribetKrein,GiribetKreinII}.
The indefinite abstract smoothing problem in Krein spaces was initially studied in \cite{GiribetKreinsplines} but for a linear constraint. Also, in \cite{CanuII} another version of this abstract smoothing problem was studied. In \cite{UMA,JOTA}, non-convex constraints were considered.


For each of the three indefinite problems presented, we analyze if the problem has a solution for every point of the Krein space, we study the existence of a linear and continuous operator that maps each data point to its solution, and when the associated operator problem considering the $J$-trace has a solution. 


The paper is organized as follows. Section 2 fixes notation and recalls the basics of Krein spaces and the notion of (weak) complementability between a selfadjoint operator and a closed subspace of $\HH$. Also, certain properties of the Schur complement in the Krein space setting are collected.

In Section 3,  global solutions of the indefinite weighted least squares problem \eqref{WLSPI}, where the range of $A$ is not necessarily closed is studied. It is proved that  \eqref{WLSPI} admits a solution for every $x \in \HH$ if and only if $A$ admits a $W$-inverse, or equivalently, if \eqref{eqpI} admits a solution.

In Section 4, we give conditions for the existence of bounded global solutions in both the indefinite spline and the smoothing problems. We also compare the solutions of these problems to the solutions of the corresponding operator problems in the $J$-trace. Finally, indefinite $W$-optimal inverses are defined and they turn out to be the corresponding global solutions of the indefinite smoothing problem.

\section{Preliminaries}
\label{sec:preliminaries}

We assume that all Hilbert spaces are complex and separable. If $\HH, \KK$ are separable complex Hilbert spaces,  $L(\KK, \HH)$  is the set of bounded linear operators from $\KK$ to $\HH,$ for short we write $L(\HH):=L(\HH,\HH)$ and $L(\HH)^+$ denotes the cone of positive semidefinite operators in $L(\HH).$  

The range and nullspace of any $A \in L(\HH)$ are denoted by $\ran A$ and $\ker A$, respectively. Given two operators $S, T \in L(\HH),$ the notation  $T \leq_{\HH} S$ signifies that $S-T \in L(\HH)^+.$ 
For any $T \in L(\HH),$ $\vert T \vert := (T^*T)^{1/2}$ is the modulus of $T$ and $T=U\vert T\vert$ is the polar decomposition of  $T,$ with $U$ the partial isometry such that $\ker U=\ker T.$
The symbol $A^{\dagger}$ denotes the Moore-Penrose inverse of the operator $A \in L(\HH).$

The direct sum of two closed subspaces $\M$ and $\N$ of $\HH$ is represented by $\M \dot{+} \N,$ and $\M \oplus \N$ if $\M \subseteq \N^{\perp}$. 
If $\HH$ is decomposed as $\HH=\M \dot{+} \N,$ the projection onto $\M$ with nullspace $\N$ is denoted by $P_{\M {\mathbin{\!/\mkern-3mu/\!}} \N}$ and abbreviated $P_{\M}$ when $\N = \M^{\perp}.$ 

We recall that, for a Banach space $(\mc{E}, \Vert \cdot \Vert)$ and an open set $\mathcal U \subseteq \mc{E},$ a function $F: \mc{E} \rightarrow \mathbb{R}$ is said to be \emph{Fr\'echet differentiable} at $X_0 \in \mathcal U$ if there exists $DF(X_0)$ a bounded linear functional such that
$$\lim\limits_{Y\rightarrow 0} \frac{|F(X_0+Y)-F(X_0) - DF(X_0)(Y)|}{\Vert Y \Vert}=0.$$
If $F$ is Fr\'echet differentiable at every $X_0 \in \mc{E}$, $F$ is called Fr\'echet differentiable  on $\mc{E}$  and the function $DF$ which assigns to every point $X_0 \in \mc{E}$ the derivative $DF(X_0),$ is called the Fr\'echet derivative of the function $F.$ If, in addition, the derivative $DF$ is continuous, $F$ is  said to be a \emph{class $\mc{C}^1$-function}, in symbols, $F \in \mc{C}^1(\mc{E}, \mathbb{R}).$
\subsection*{\textbf{Krein Spaces}}



An indefinite metric space $(\HH, \K{ \ }{ \ })$ is a {\emph{Krein space}} if it admits a decomposition as an $\K{ \ }{ \ }$-orthogonal direct sum 
\begin{equation} \label{fundamentaldecom}
	\HH=\HH_+ \ [\dotplus] \ \HH_-,
\end{equation} 
where $(\HH_+, \K{ \ }{ \ })$ and $(\HH_-, -\K{ \ }{ \ })$ are Hilbert spaces. Any decomposition with these properties is called a {\emph{fundamental decomposition}} of  $\HH.$


Every fundamental decomposition of $\HH$ has an associated {\emph{signature operator}}: $J:=P_{+} - P_{-}$ with $P_{\pm}:=P_{\HH_{\pm}  {\mathbin{\!/\mkern-3mu/\!}} \HH_{\mp}}.$ The indefinite metric and the inner product corresponding to a fundamental decomposition of $\HH$ with signature operator $J$ are related to each other by 
$$\PI{x}{y}_J=\K{Jx}{y} \quad x, y \in \HH.$$

The {\emph{orthogonal companion}} of a set $\St$ in  $\HH$, which we denote by $\St^{\perpi}$, is the subspace of those  $h \in \HH$ such that $[h,x] = 0$ for all $x\in \St$.
By $\St^{\bot_J}$ we denote the orthogonal  complement of $\St$ with respect to $\PI{\cdot}{\cdot}_J,$ and $\oplus_J$ is defined likewise. Note that, for any signature operator $J$ of $\HH,$
$$\St^{\perpi}=J\St^{\perp_J}=(J\St)^{\perp_J}.$$

If $\HH$ and $\KK$ are Krein spaces, $L(\KK, \HH)$ stands for the vector space of  all the linear operators from $\KK$ to $\HH$ which are bounded in an associated Hilbert space $(\HH, \PI{ \ }{ \ }_J).$Since the norms generated by different fundamental decompositions of a Krein space $\HH$ are equivalent (see, for instance, \cite[Theorem 7.19]{Azizov}), $L(\KK, \HH)$ does not depend on the chosen underlying Hilbert space.

The symbol $T^{\#}$ stands for the $\K{ \ }{ \ }$-adjoint of $T \in L(\HH)$. It can be checked that, for any signature operator $J$ of $\HH,$ it holds that $$T^{\#}=JT^{*_J}J,$$ where $T^{*_J}$ denotes the $\PI{ \ }{ \ }_J$-adjoint of $T.$ 
The set of the operators $T \in L(\HH)$ such that $T=T^{\#}$
is denoted by $L(\HH)^s$. If $T\in L(\HH)^s$ and $\K{Tx}{x} \geq 0 \mbox{ for every } x \in \HH,$ $T$ is said to be {\emph{positive (semidefinite)}}; the notation  $S \leq T$ signifies that $T-S$ is positive.

Let $J$ be a signature operator of $\HH.$  Given $T \in L(\HH)$ and $\St $ a closed subspace of $\HH,$ the symbols $T^{\dagger_J}$ and $P_{\St}^J$ denote the Moore-Penrose of $T$ and the orthogonal projection onto $\St,$ respectively in the Hilbert space $(\HH, \PI{ \ }{ \ }_J).$

Given $W \in L(\HH)^s$ and $\St$ a closed subspace of $\HH,$ we say that $\St$ is $W$-\emph{positive} if 
$\K{Ws}{s} > 0$ for every $s \in \St, \ s\not =0.$ $W$-\emph{nonnegative}, $W$-\emph{neutral}, $W$-\emph{negative} and $W$-\emph{nonpositive} subspaces are defined likewise. 

Standard references on Krein space theory are \cite{Azizov}, \cite{Bognar} and  \cite{DR1}.

\subsection*{Schur complement in Krein Spaces}

%
The notion of Schur complement (or shorted operator) of $A$ to $\St$ for a positive operator $A$ on a Hilbert space $\HH$ and $\St \subseteq \HH$ a closed subspace, was introduced by M.G.~Krein \cite{Krein}. He proved that the set $\{ X \in L(\HH): \ 0\leq_{\HH} X\leq_{\HH} A \mbox{ and } \ran X \subseteq \St^{\perp}\}$ has a maximum element, which he defined as the {{Schur complement}} $A_{/ \St}$ of $A$ to $\St.$ This notion was later rediscovered by Anderson and Trapp \cite{Shorted2}. 

%
In \cite{AntCorSto06} Antezana et al., extended the notion of Schur complement to any bounded operator $A$ satisfying a weak complementability condition with respect to a given pair of closed subspaces $\St$ and $\T.$ In particular, if $A$ is a bounded selfadjoint operator, $\St=\T$ and the matrix decomposition of $A$ with respect to $\St$ is, $A=\begin{pmatrix} a & b\\ b^* & c \end{pmatrix},$ the condition of \emph{$\St$-weak complementability} reads $\ran b \subseteq \ran \vert a \vert^{1/2},$ which is automatic for positive operators. 


In \cite{Contino4}, the notions of $\St$-complementability, $\St$-weak complementability and the Schur complement were extended to the Krein space setting. 

In what follows $\HH$ is a Krein space. 

\begin{Def}  Let $W \in L(\HH)^s$ and $\St$ be a closed subspace of $\HH.$ The operator $W$ is called $\St$-\emph{complementable} if 
	$$\HH=\St + (W\St)^{\perpi}.$$
\end{Def}


In a similar way the $\St$-weak complementability in Krein spaces, with respect to a fixed signature operator $J,$ is defined.

\begin{Def} Let $W \in L(\HH)^s$ and $\St$ be a closed subspace of $\HH.$ The operator $W$ is $\St$-\emph{weakly complementable} with respect to a signature operator $J$ if $JW$ is $\St$-weakly complementable in $(\HH, \PI{ \ }{ \ }_J).$ 
\end{Def}


The $\St$-weak complementability of $W$ does not depend on the signature operator, see \cite[Theorem 4.4]{Contino4}.
Then, we simply say that $W$ is $\St$-weakly complementable, whenever $W$ is $\St$-weakly complementable with respect to a signature operator $J.$

%

\begin{Def} Let $W \in L(\HH)^s,$ $\St$ be a closed subspace of $\HH$ and $J$ a signature operator. Suppose that $W$ is $\St$-weakly complementable. The \emph{Schur complement} of $W$ to $\St$ corresponding to $J$ is
	$$W_{/ [\St]}^J =J (JW)_{ / \St}.$$
\end{Def}

In \cite[Theorem 4.5]{Contino4} it was proved that the Schur complement does not depend on the fundamental decomposition of $\HH.$ Henceforth we write $W_{/ [\St]}$ for this operator.

\section{Global solutions of weighted least squares problems}\label{section3}

Given $A\in L(\KK, \mathcal H)$, $W\in L(\HH)^{s}$ and $x \in \HH$, an \emph{indefinite weighted least squares solution} (or $W$-ILSS) of the equation $Az = x$  is a vector $u\in \KK$ such that
\begin{equation}
\label{WLSP}
\K{W(Au-x)}{Au-x} \leq \K{W(Az-x)}{Az-x} \textrm{ for every }z \in \KK.
\end{equation}
It can be proved  that  $u \in \KK$ is an indefinite weighted least squares solution of $Az =x$ if and only if $\ran A$ is $W$-nonnegative and $Au - x \in (W\ran A)^{\perpi}=\ker A^{\#}W,$ see \cite[Theorem 8.4]{Bognar} and \cite[Lemma 3.1]{GiribetKrein}. 
\begin{prop} \label{propWLSS} Let $A\in L(\KK,\mathcal H)$, $W\in L(\HH)^{s}$ and $x \in \HH.$ Then $u \in \KK$ is a $W$-ILSS of $Az =x$ if and only if $\ran A$ is $W$-nonnegative and $A^{\#}W(Au-x)=0.$
\end{prop}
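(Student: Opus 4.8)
The plan is to characterize the minimizer of the indefinite quadratic functional $f(z) = \K{W(Az-x)}{Az-x}$ over $z \in \KK$. Expanding, $f(z) = \K{WAz}{Az} - \K{WAz}{x} - \K{Wx}{Az} + \K{Wx}{x}$, and using the $\K{\cdot}{\cdot}$-adjoint this becomes $f(z) = \K{A^{\#}WAz}{z} - 2\,\real\K{A^{\#}Wx}{z} + \K{Wx}{x}$. The key feature of the indefinite setting is that this functional is bounded below (so that a minimum can exist) precisely when the quadratic form $z \mapsto \K{A^{\#}WAz}{z}$ is nonnegative, i.e.\ when $A^{\#}WA \geq 0$ in $\KK$. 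Since $\K{A^{\#}WAz}{z} = \K{W(Az)}{Az}$, this is exactly the condition that $\ran A$ is $W$-nonnegative. So my first step would be to establish the equivalence between $\ran A$ being $W$-nonnegative and the nonnegativity of the quadratic form governing $f$.

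Next I would prove the two implications. For the forward direction, suppose $u$ is a $W$-ILSS. First I would show $\ran A$ must be $W$-nonnegative: if not, there is $w \in \ran A$ with $\K{Ww}{w} < 0$, say $w = Av_0$, and then along the line $z = u + t v_0$ the functional $f(u + tv_0)$ has a strictly negative leading coefficient in $t$, forcing $f \to -\infty$ and contradicting that $u$ is a minimizer. Having secured $W$-nonnegativity, I would use the standard first-order (variational) argument: for any $v \in \KK$ and $t \in \RR$, the real-valued function $g(t) = f(u+tv) \geq f(u) = g(0)$ has a minimum at $t=0$, so $g'(0) = 0$. Computing $g'(0) = 2\,\real\K{A^{\#}W(Au-x)}{v}$ and letting $v$ range over $\KK$ (replacing $v$ by $iv$ to kill the real-part restriction) yields $A^{\#}W(Au-x) = 0$.

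For the converse, suppose $\ran A$ is $W$-nonnegative and $A^{\#}W(Au-x) = 0$. Then for arbitrary $z \in \KK$, writing $z = u + v$ with $v = z - u$, I would expand
\begin{equation*}
f(z) = f(u) + 2\,\real\K{A^{\#}W(Au-x)}{v} + \K{A^{\#}WAv}{v}.
\end{equation*}
The middle term vanishes by the orthogonality hypothesis, and the last term equals $\K{W(Av)}{Av} \geq 0$ because $Av \in \ran A$ and $\ran A$ is $W$-nonnegative. Hence $f(z) \geq f(u)$ for all $z$, which is exactly \eqref{WLSP}.

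The main obstacle I anticipate is handling the indefiniteness carefully: in a genuine Krein space the inner product is not positive, so none of the usual Hilbert-space coercivity or convexity arguments apply directly, and one must be attentive to the fact that $\K{W\cdot}{\cdot}$ can take negative values. The whole point is that the $W$-nonnegativity of $\ran A$ is the precise substitute for convexity that makes the stationarity condition both necessary and sufficient; the delicate part is the forward direction, where I must argue that failure of $W$-nonnegativity destroys the existence of a minimum (the ``$f \to -\infty$'' argument) before I can even invoke the first-order condition. The referenced results \cite[Theorem 8.4]{Bognar} and \cite[Lemma 3.1]{GiribetKrein} presumably supply exactly this bounded-below analysis, and the identification $(W\ran A)^{\perpi} = \ker A^{\#}W$ stated in the excerpt is what lets me rewrite the orthogonality condition $Au - x \in (W\ran A)^{\perpi}$ in the clean operator form $A^{\#}W(Au-x) = 0$.
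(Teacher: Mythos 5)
Your proof is correct. Be aware, though, that the paper offers no proof of this proposition at all: it is stated as a quotation of known results, \cite[Theorem 8.4]{Bognar} and \cite[Lemma 3.1]{GiribetKrein}, combined with the observation that $(W\ran A)^{\perpi}=\ker A^{\#}W$. So your variational argument is not competing with an internal proof; it is a self-contained substitute for the citations, and it is the standard one (essentially what the cited references do): expanding $f(u+tv)=f(u)+2t\,\real\K{A^{\#}W(Au-x)}{v}+t^{2}\K{WAv}{Av}$ yields, from the existence of a minimizer, first the $W$-nonnegativity of $\ran A$ (your $f\to-\infty$ argument along a direction with $\K{WAv_0}{Av_0}<0$) and then the normal equation from stationarity; the converse is immediate since the cross term vanishes and the quadratic term is nonnegative. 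Two minor caveats. First, your framing claim that $f$ is bounded below ``precisely when'' the form $z\mapsto\K{WAz}{Az}$ is nonnegative overstates one direction: nonnegativity of the form is necessary but not sufficient for boundedness below (a degenerate nonnegative form with a nonzero linear term in a null direction still drives $f$ to $-\infty$); your actual proof uses only the correct necessary direction, so nothing breaks, but do not lean on that sentence. Second, the final passage from $\K{A^{\#}W(Au-x)}{v}=0$ for all $v\in\KK$ to $A^{\#}W(Au-x)=0$ uses the nondegeneracy of the indefinite inner product on $\KK$; this is automatic in a Krein space but is worth stating explicitly.
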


\smallskip

To study the existence of solutions  of problem \eqref{WLSP} for every $x\in \HH$, the concept of \emph{indefinite weighted inverse} is a fundamental tool. We extend the concept of weighted inverses first defined by Mitra \cite{Mitra} and later generalized to Hilbert space operators in \cite{[CorFonMae13], Contino}. See also \cite{Contino3}.

\begin{Def}\rm Given $A\in L(\KK,\mathcal H)$ and $W\in L(\HH)^{s}.$ An operator $G \in L(\HH,\KK)$ is called an \emph{indefinite $W$-inverse} of $A$ (or an \emph{indefinite bounded global solution} of problem \eqref{WLSP}) if for each $x \in \HH$, $G x$ is a $W$-ILSS of $Az=x$, i.e.,
	$$
	\K{W(AGx-x)}{AGx-x} \leq \K{W(Az-x)}{Az-x} \textrm{ for every }z \in \KK.
	$$
\end{Def}

Note that $A$ has an indefinite $W$-inverse if and only if the problem \eqref{WLSP} admits a solution for every $x\in \mathcal H.$ Also, it is possible to assign a weighted indefinite least squares solution of $Az=x$ to each $x \in \HH$ in a linear and continuous way.
\medskip

In the following we give necessary and sufficient conditions for problem \eqref{WLSP} to admit a bounded global solution, when $A$ is not  necessarily a closed range operator (c.f. \cite[Theorem 4.4]{Contino33}).

\begin{thm}\label{sol global sii  A-inversa} Let $A\in L(\KK, \mathcal H)$ and $W\in L(\HH)^{s}.$ 
	Then the following statements are equivalent:
	\begin{enumerate}
		\item[i)] $Az=x$ admits a $W$-ILSS for every $x \in \HH;$ 
		\item[ii)]  $\ran A +[W(\ran A)]^{\perpi}=\HH$ and $\ran A$ is $W$-nonnegative;
		\item[iii)] the normal equation \begin{equation} \label{ecuacion normal}
			A^{\#}W(AX-I)=0
		\end{equation} admits a solution and $\ran A$ is $W$-nonnegative;
		\item[iv)] the operator $A$ admits an indefinite $W$-inverse.
	\end{enumerate}
	In this case, the set of indefinite $W$-inverses of $A$ is the set of solutions of \eqref{ecuacion normal}.
\end{thm}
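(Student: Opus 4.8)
The plan is to establish the equivalences in a cycle, using Proposition~\ref{propWLSS} as the bridge between the variational formulation \eqref{WLSP} and the algebraic normal equation. I would prove $i)\Leftrightarrow ii)$, then $ii)\Leftrightarrow iii)$, then the equivalence of these with $iv)$, and finally identify the solution set.

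First I would handle $i)\Leftrightarrow ii)$. By Proposition~\ref{propWLSS}, a vector $u$ is a $W$-ILSS of $Az=x$ if and only if $\ran A$ is $W$-nonnegative and $Au-x\in(W\ran A)^{\perpi}=\ker A^{\#}W$. The existence of a $W$-ILSS for \emph{every} $x\in\HH$ therefore requires $\ran A$ to be $W$-nonnegative, and it requires that every $x$ decompose as $x=Au+(x-Au)$ with $Au\in\ran A$ and $x-Au\in(W\ran A)^{\perpi}$; this is exactly the condition $\ran A+[W(\ran A)]^{\perpi}=\HH$. I would spell out both directions of this observation carefully, noting that the $W$-nonnegativity hypothesis in $ii)$ is precisely what Proposition~\ref{propWLSS} forces.

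Next, for $ii)\Leftrightarrow iii)$, I would translate the subspace condition into the solvability of the operator equation \eqref{ecuacion normal}. Assuming $\ran A$ is $W$-nonnegative throughout, the claim reduces to showing that $\ran A+[W(\ran A)]^{\perpi}=\HH$ holds if and only if $A^{\#}W(AX-I)=0$ has a solution $X\in L(\HH,\KK)$. For a fixed $x$, the equation $A^{\#}W(Au-x)=0$ with $u\in\KK$ is solvable exactly when $x$ admits the decomposition above, using again $(W\ran A)^{\perpi}=\ker A^{\#}W$. Passing from a pointwise solvability for each column $x$ to a \emph{bounded operator} solution $X$ is the delicate point: I expect to invoke Douglas-type range/factorization arguments (in the Hilbert space $(\HH,\PI{\cdot}{\cdot}_J)$ associated to a fixed signature operator $J$) to guarantee that the assignment $x\mapsto u$ can be realized by a single bounded operator, rather than merely pointwise. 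This is where I anticipate the \textbf{main obstacle}, since $A$ need not have closed range, so one cannot simply appeal to a Moore--Penrose inverse of $A$ and must instead argue via closed ranges of the composite operators appearing in \eqref{ecuacion normal}.

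Finally, $iii)\Leftrightarrow iv)$ and the identification of the solution set follow by unwinding definitions: an operator $G\in L(\HH,\KK)$ is an indefinite $W$-inverse of $A$ precisely when, for every $x$, $Gx$ is a $W$-ILSS of $Az=x$, which by Proposition~\ref{propWLSS} means $\ran A$ is $W$-nonnegative and $A^{\#}W(AGx-x)=0$ for all $x$, i.e.\ $A^{\#}W(AG-I)=0$. Thus $G$ is an indefinite $W$-inverse if and only if $G$ solves \eqref{ecuacion normal}, which simultaneously closes the cycle back to $i)$ (since $iv)\Rightarrow i)$ is immediate and $iii)\Rightarrow iv)$ produces the required bounded $G$) and proves the concluding assertion that the indefinite $W$-inverses of $A$ are exactly the solutions of the normal equation.
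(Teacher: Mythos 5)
Your proposal is correct and takes essentially the same route as the paper: Proposition~\ref{propWLSS} reduces everything to the condition $Au-x\in\ker A^{\#}W$, the decomposition $\ran A+[W(\ran A)]^{\perpi}=\HH$ is translated into the range equality $\ran A^{\#}W=\ran A^{\#}WA$, and Douglas' Lemma is exactly the tool the paper invokes to produce a bounded solution of the normal equation \eqref{ecuacion normal} --- note that no closed-range hypotheses are needed there, since Douglas' Lemma requires only range inclusion, so the obstacle you anticipated is milder than you feared. The identification of the indefinite $W$-inverses with the solutions of \eqref{ecuacion normal}, and the closing of the cycle via $iii)\Rightarrow iv)\Rightarrow i)$, also match the paper's argument.
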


\begin{proof} Since $W\in L(\HH)^{s},$ it holds that $[W(\ran A)]^{\perpi}=W^{-1}(\ran A^{\perpi})=\ker A^{\#}W.$
	
	$i) \Rightarrow ii)$: By Proposition \ref{propWLSS}, Problem \eqref{WLSP} admits a solution for every $x\in \HH$ if and only if  $\ran A$ is $W$-nonnegative and there exists $u \in \KK$ such that $A^{\#}WAu=A^{\#}Wx$ for every $x \in \HH$ or, equivalently $\ran A^{\#}W=\ran A^{\#}WA.$ Then
	$$\HH=(A^{\#}W)^{-1}(\ran A^{\#}W)=(A^{\#}W)^{-1}(A^{\#}W(\ran A))=\ran A+[W(\ran A)]^{\perpi}.$$
	
	$ii) \Rightarrow iii)$: Suppose that $\ran A +[W(\ran A)]^{\perpi}=\HH.$ Then 
	$\ran A^{\#}W=\ran A^{\#}WA$ and $iii)$ follows by applying Douglas' Lemma \cite{Douglas}.
	
	$iii) \Rightarrow iv)$: There exists $X_0\in L(\HH,\KK)$ such that $A^{\#}WAX_0=A^{\#}W$ if and only if  $A^{\#}W(AX_0x-x)=0$ for every $x\in \HH.$ Since $\ran A$ is $W$-nonnegative, by Proposition \ref{propWLSS}, $X_0$ is a $W$-inverse of $A$.
	
	$iv) \Rightarrow i)$: It is straightforward.
	
	In this case, we have also proved that the set of $W$-inverses of $A$ is the set of solutions of \eqref{ecuacion normal}.
\end{proof}

\smallskip
\begin{cor} \label{corwc} Let $A\in L(\KK, \mathcal H)$ and $W\in L(\HH)^{s}.$   If $A$ admits an indefinite $W$-inverse then $W$ is $\clran A$ complementable.
\end{cor}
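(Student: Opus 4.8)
The plan is to read off the conclusion from the equivalence of $i)$ and $ii)$ in the preceding theorem. If $A$ admits an indefinite $W$-inverse, that theorem gives
\[ \ran A + [W(\ran A)]^{\perpi} = \HH. \]
The task is then to replace $\ran A$ by its closure $\clran A$ throughout, since $\HH = \clran A + [W(\clran A)]^{\perpi}$ is precisely the statement that $W$ is $\clran A$-complementable.

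The key observation is that an orthogonal companion is insensitive to closure: because the indefinite metric $\K{\cdot}{\cdot}$ is continuous in each variable (indeed $\K{x}{y} = \PI{Jx}{y}_J$ for any signature operator $J$), for every subset $\St \subseteq \HH$ one has $\St^{\perpi} = (\ol{\St})^{\perpi}$. First I would apply this to the two subspaces $W(\ran A)$ and $W(\clran A)$. Continuity and boundedness of $W$, together with $\ran A \subseteq \clran A$, give $\ol{W(\ran A)} = \ol{W(\clran A)}$, and hence
\[ [W(\ran A)]^{\perpi} = [W(\clran A)]^{\perpi}. \]

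With this identity the argument closes by a sandwich. Substituting it into the decomposition furnished by the theorem yields $\ran A + [W(\clran A)]^{\perpi} = \HH$, and since $\ran A \subseteq \clran A$,
\[ \HH = \ran A + [W(\clran A)]^{\perpi} \subseteq \clran A + [W(\clran A)]^{\perpi} \subseteq \HH, \]
so equality holds throughout and $W$ is $\clran A$-complementable. The only point requiring care is the equality $\ol{W(\ran A)} = \ol{W(\clran A)}$: the inclusion $\subseteq$ is immediate from $\ran A \subseteq \clran A$, while for $\supseteq$ one uses that $W$ is bounded, so $W(\clran A) \subseteq \ol{W(\ran A)}$, and taking closures gives the reverse inclusion. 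The $W$-nonnegativity of $\ran A$ plays no role in this implication and may be discarded.
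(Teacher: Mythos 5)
Your proposal is correct and follows essentially the same route as the paper: the paper's proof also invokes the equivalence $i)\Leftrightarrow ii)$ of Theorem \ref{sol global sii  A-inversa} to get $\HH=\ran A +[W(\ran A)]^{\perpi}$ and then passes to $\clran A + [W(\clran A)]^{\perpi}$ via the identity $[W(\ran A)]^{\perpi}=[W(\clran A)]^{\perpi}$, which the paper states without justification and you verify in detail (continuity of the indefinite metric plus boundedness of $W$). Your observation that the $W$-nonnegativity of $\ran A$ is not needed here is also consistent with the paper's argument.
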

\begin{proof}
Suppose that $A$ admits a $W$-inverse. By Theorem \ref{sol global sii  A-inversa}, $$\HH=\ran A +[W(\ran A)]^{\perpi} \subseteq \clran A + [W(\clran A)]^{\perpi},$$ because  $[W(\ran A)]^{\perpi}=[W(\clran A)]^{\perpi}.$
\end{proof} 

\begin{obs}  Let $A\in L(\KK, \mathcal H)$ and suppose that $W=I.$ If $A$ admits an indefinite inverse then $\clran A$ is a \emph{regular} subspace of $\HH,$ that is, $\HH=\clran [\dotplus] \ran A^{\perpi}.$ See \cite[Section 1.7]{Azizov}.
\end{obs}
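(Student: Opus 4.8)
The plan is to read this Remark off directly from Corollary \ref{corwc}, so that the only real work is an elementary non-degeneracy argument. Taking $W=I$ (which certainly lies in $L(\HH)^s$), the hypothesis that $A$ admits an indefinite inverse is precisely the hypothesis of Corollary \ref{corwc}. That corollary then yields that $I$ is $\clran A$-complementable, which by definition means $\HH=\clran A+(I\,\clran A)^{\perpi}=\clran A+(\clran A)^{\perpi}$. Since the orthogonal companion of a set equals that of its closed linear span, $(\clran A)^{\perpi}=(\ran A)^{\perpi}$, and we already obtain the (a priori only set-theoretic) decomposition $\HH=\clran A+(\ran A)^{\perpi}$.

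The step I expect to carry the actual content is the upgrade from this plain sum to the $\K{\cdot}{\cdot}$-orthogonal direct sum $[\dotplus]$ demanded by the definition of a regular subspace. Orthogonality of the two summands is immediate from the definition of $(\clran A)^{\perpi}$, so only directness, i.e.\ $\clran A\cap(\clran A)^{\perpi}=\{0\}$, has to be checked, and I would deduce it from the non-degeneracy of the indefinite metric. Indeed, if $x\in\clran A\cap(\clran A)^{\perpi}$, then for an arbitrary $h\in\HH$ written as $h=m+n$ with $m\in\clran A$ and $n\in(\clran A)^{\perpi}$ one has $\K{x}{m}=0$ because $x\in(\clran A)^{\perpi}$, and $\K{x}{n}=\overline{\K{n}{x}}=0$ because $x\in\clran A$ and $n\in(\clran A)^{\perpi}$; hence $\K{x}{h}=0$ for every $h\in\HH$, forcing $x=0$. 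This gives $\HH=\clran A\ [\dotplus]\ (\clran A)^{\perpi}=\clran A\ [\dotplus]\ (\ran A)^{\perpi}$, which is exactly the assertion that $\clran A$ is regular.

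In keeping with the cited reference, the same conclusion can be obtained by invoking the standard description of regular (projectively complete) subspaces in \cite[Section 1.7]{Azizov}: there, a closed subspace $\M$ with $\M+\M^{\perpi}=\HH$ is automatically non-degenerate and satisfies $\HH=\M\ [\dotplus]\ \M^{\perpi}$, and it suffices to apply this to $\M=\clran A$. I do not foresee any serious obstacle: the whole substance is packaged in Corollary \ref{corwc}, and the only genuine step is the passage from ``$+$'' to ``$[\dotplus]$'', settled either by the short non-degeneracy argument or by the quoted Krein-space fact.
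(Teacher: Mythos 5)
Your proposal is correct and follows essentially the same route the paper intends: the Remark is read off from Corollary \ref{corwc} with $W=I$, which gives $\HH=\clran A+(\ran A)^{\perpi}$, and the upgrade to the orthogonal direct sum $[\dotplus]$ is exactly the standard fact about regular subspaces cited from \cite[Section 1.7]{Azizov}. Your explicit non-degeneracy argument for directness (an isotropic vector of $\clran A$ is $\K{\cdot}{\cdot}$-orthogonal to all of $\HH=\clran A+(\clran A)^{\perpi}$, hence zero) is precisely the content of that cited fact, so you have merely made transparent what the paper delegates to the reference.
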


\subsection*{Indefinite operator weighted least squares problems}
In this subsection we study weighted least squares problems for operators considering the $J$-trace.  We denote by $S_p$ the $p${\emph{-Schatten class}} for $1 \leq p < \infty.$ The reader is referred to \cite{Ringrose} for further details on $S_p$-operators in Hilbert spaces. 

Let $(\HH, \K{ \ }{ \ })$ be a Krein space. If $J$ is a signature operator for $\HH,$ fix the Hilbert space $(\HH, \PI{ \ }{ \ }_J),$ where $\PI{x }{y}_J=\K{Jx}{y}$ for all $x, y \in \HH.$ 
The operator $T$ belongs to the Schatten class $S_p(J)$ if $T \in S_p$ when viewed as acting on the associated Hilbert space $(\HH, \PI{ \ }{ \ }_J).$

By \cite[Lemma 5.3]{Contino33}, if $T \in S_p(J_a)$ for some fundamental decomposition of $\HH$ with signature operator $J_a$ then $T \in S_p(J_b)$ for any other fundamental decomposition of $\HH$ with signature operator $J_b.$  So we just write $S_p$ instead of $S_p(J).$

\begin{Def} Let $(\HH, \K{ \ }{ \ })$ be a separable Krein space with signature operator $J$ and fix the associated Hilbert space $(\HH, \PI{ \ }{ \ }_J).$ If $T \in S_1$ and $\{e_n : n\in \mathbb{N} \}$ is an orthonormal basis of $(\HH, \PI{ \ }{ \ }_J),$ then the $J${\emph{-trace}} of $T,$ denoted by $\tr_{J}(T),$ is defined as 
$$\tr_{J}(T):=\sum_{n=1}^{\infty} \K{Te_n}{e_n}.$$
\end{Def}

Notice that $$\tr_J(T)=\sum_{n=1}^{\infty} \K{Te_n}{e_n}=\sum_{n=1}^{\infty} \PI{JTe_n}{e_n}_J=\tr(JT),$$ where $\tr(T)$ denotes the \emph{trace} of the operator $T$ in the Hilbert space $(\HH,\PI{ \ }{ \ }_J),$ see \cite{Ringrose}. 
Whence the $J$-trace of $T$ does not depend on the particular choice of the orthonormal basis  \cite[Lemma 2.2.1]{Ringrose}. See \cite[Lemma 5.4]{Contino33}, for the basic properties of the $J$-trace. 

As noted in \cite[Section 5]{Contino33}, if
$\K{S}{T}_J:=\tr_{J}(T^{\#}S)$ for $S,T \in S_2$ then $(S_2, \K{ \ } { \ }_J)$ is a Krein space. Moreover,
$\tr_{J}(T^{\#}T)=\Vert P_+ T \Vert_2^2-\Vert P_- T \Vert_2^2,$ where $P_{\pm}=\frac{I\pm J}{2}$ and $\Vert T \Vert_2$ denotes the $2$-Schatten  norm of the operator $T \in S_2.$

\medskip Let $(\HH, \K{ \ }{ \ })$ be a separable Krein space with signature operator $J$ and fix the associated Hilbert space $(\HH, \PI{ \ }{ \ }_J).$ Given $A\in L(\KK, \mathcal H)$ and $W\in S_1 \cap L(\HH)^{s},$ 
we analyze if there exists
\begin{equation}
\label{eqp}
\min_{X\in L(\HH,\KK)} \tr_{J}((AX-I)^{\#}W(AX-I)).
\end{equation}
We will refer to problem \eqref{eqp} as the \emph{indefinite operator weighted least squares problem}. In order to study \eqref{eqp}, we introduce the following associated problem: given $A\in L(\KK, \mathcal H)$ and $W\in L(\HH)^{s},$ analyze the existence of 
\begin{equation} \label{eqop}
	\min_{X\in L(\HH,\KK)} (AX-I)^{\#}W(AX-I),
\end{equation}
in the order induced in $L(\HH)$ by the cone of positive operators.
By studying problems \eqref{eqp} and \eqref{eqop} we will relate the existence of solutions of \eqref{eqp} to the existence of bounded global solutions of \eqref{WLSP}.

In \cite{Contino33}, problems  \eqref{eqp} and \eqref{eqop} were studied for $A\in L(\mathcal H)$  such that $\ran A$ is closed. The results obtained in \cite{Contino33} are also valid in the general case.

\begin{prop} \label{PropA} Let  $A\in L(\KK, \mathcal H)$ and $W\in S_1 \cap L(\HH)^{s}.$ 
Then the following statements are equivalent:
	\begin{enumerate}
		\item [i)]  $\ran A$ is $W$-nonnegative and  there exists $$\min_{X\in L(\HH,\KK)} \tr_{J}((AX-I)^{\#}W(AX-I))$$ for any signature operator $J;$
		\item [ii)] $\ran A$ is $W$-nonnegative and $\ran A+[W(\ran A)]^{\perpi}=\HH;$
		\item [iii)] there exists $\min_{X\in L(\HH,\KK)} (AX-I)^{\#}W(AX-I).$
	\end{enumerate}
In this case, $$\min_{X\in L(\HH,\KK)} \tr_{J}((AX-I)^{\#}W(AX-I))=\tr_{J}(W_{ /[\clran A]})$$ and $$\min_{X\in L(\HH,\KK)} (AX-I)^{\#}W(AX-I)=W_{ /[\clran A]}.$$
\end{prop}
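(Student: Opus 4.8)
The plan is to route all three conditions through the normal equation \eqref{ecuacion normal}, which by Theorem \ref{sol global sii A-inversa} is (together with $W$-nonnegativity) equivalent to $ii)$, and to prove the cycle $ii)\Rightarrow iii)\Rightarrow i)\Rightarrow ii)$. I begin with $ii)\Rightarrow iii)$ by completing the square. Under $ii)$, Theorem \ref{sol global sii A-inversa} furnishes $X_0$ with $A^{\#}W(AX_0-I)=0$; set $R_0:=AX_0-I$. Writing $AX-I=R_0+A(X-X_0)$ and expanding, the identity $A^{\#}WR_0=0$ (hence also $R_0^{\#}WA=(A^{\#}WR_0)^{\#}=0$) annihilates the cross terms, leaving
$$(AX-I)^{\#}W(AX-I)=R_0^{\#}WR_0+(X-X_0)^{\#}(A^{\#}WA)(X-X_0).$$
Since $\ran A$ is $W$-nonnegative, $A^{\#}WA$ is positive on $\KK$, so the last summand is positive in $L(\HH)^s$; thus $R_0^{\#}WR_0$ is the minimum of \eqref{eqop} and $iii)$ holds.

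For $iii)\Rightarrow i)$ I would extract the first-order conditions from the operator minimizer. If the minimum of \eqref{eqop} is attained at $X_m$, inserting $X=X_m+tY$ and letting $t\to0^{\pm}$ forces the coefficient $R_m^{\#}WAY+Y^{\#}A^{\#}WR_m$ (with $R_m:=AX_m-I$) to vanish for every $Y$; testing against $Y$ and $iY$ then gives $R_m^{\#}WAY=0$, i.e.\ $A^{\#}W(AX_m-I)=0$, while the surviving quadratic term $Y^{\#}A^{\#}WAY\geq0$ forces $\ran A$ to be $W$-nonnegative. Re-running the Paragraph~1 identity with $X_m$ yields $(AX-I)^{\#}W(AX-I)\geq(AX_m-I)^{\#}W(AX_m-I)$ for all $X$. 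Because $W\in S_1$ every operator in sight lies in $S_1$, and $\tr_J$ is monotone for the positive-cone order: $\tr_J(S)=\tr(JS)$ and $S\leq T$ means $J(T-S)\geq0$ in $(\HH,\PI{\cdot}{\cdot}_J)$, so $\tr(J(T-S))\geq0$. Applying this to the operator inequality shows $X_m$ also minimizes the $J$-trace for every $J$, which together with $W$-nonnegativity is exactly $i)$.

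For $i)\Rightarrow ii)$ I would use Fréchet differentiability (recalled in Section~\ref{sec:preliminaries}). Fix a $J$ for which the $J$-trace minimum is attained, say at $X_0$, and pass to $(\HH,\PI{\cdot}{\cdot}_J)$ with $B:=JW$, which is $\PI{\cdot}{\cdot}_J$-selfadjoint. Using $J(AX-I)^{\#}=(AX-I)^{*_J}J$ one gets $\tr_J((AX-I)^{\#}W(AX-I))=\tr\!\big((AX-I)^{*_J}B(AX-I)\big)$, a quadratic and hence $\mc{C}^1$ functional of $X$ on the whole space $L(\HH,\KK)$. At the (interior) minimizer its Fréchet derivative vanishes; testing against $Y$ and $iY$ gives $A^{*_J}B(AX_0-I)=0$, equivalently $A^{\#}W(AX_0-I)=0$. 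Since $\ran A$ is $W$-nonnegative by hypothesis, Theorem \ref{sol global sii A-inversa} ($iii)\Rightarrow ii)$) delivers $ii)$, closing the cycle.

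It remains to identify the minimum with the Schur complement. Condition $ii)$ makes $W$ $\clran A$-complementable, hence $\clran A$-weakly complementable, so $W_{/[\clran A]}=J(JW)_{/\clran A}$ is defined. I would again work in $(\HH,\PI{\cdot}{\cdot}_J)$, decompose $B=JW=\begin{pmatrix} a & b\\ b^* & c\end{pmatrix}$ with respect to $\clran A\oplus_J(\clran A)^{\perp_J}$, and note that $E_0:=AX_0$ has range in $\clran A$ while the normal equation says $\ran\big(B(E_0-I)\big)\subseteq(\clran A)^{\perp_J}$, i.e.\ the top block-row of $B(E_0-I)$ is zero: $ae_{11}=a$ and $ae_{12}=b$. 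A direct block computation then gives $JR_0^{\#}WR_0=(E_0-I)^{*_J}B(E_0-I)=\begin{pmatrix} 0 & 0\\ 0 & c-b^*e_{12}\end{pmatrix}$, and the relation $ae_{12}=b$ identifies $b^*e_{12}$ with the reduced form $b^*a^{\dagger}b$, so this block is precisely $(JW)_{/\clran A}$; multiplying by $J$ yields $R_0^{\#}WR_0=W_{/[\clran A]}$, whence the $J$-trace value follows at once. I expect this last identification to be the main obstacle: one must check that $b^*e_{12}$ is independent of the chosen solution $e_{12}$ of $ae_{12}=b$ and agrees with the \emph{extended} Schur complement of the possibly indefinite operator $JW$ from \cite{AntCorSto06,Contino4}, rather than merely the positive-operator case, and this is exactly where the weak-complementability hypothesis $\ran b\subseteq\ran|a|^{1/2}$ and the self-adjointness forced by the normal equation must be used with care; a recurring secondary point is the legitimacy of commuting the infinite $J$-trace with the differentiation and the order limits, which is justified by $W\in S_1$.
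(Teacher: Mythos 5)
Your proof is correct and runs along the same skeleton as the paper's: the identical cycle $ii)\Rightarrow iii)\Rightarrow i)\Rightarrow ii)$, anchored on Theorem \ref{sol global sii  A-inversa} and the normal equation $A^{\#}W(AX-I)=0$. Your $ii)\Rightarrow iii)$ by completing the square is just the operator form of the paper's pointwise argument (substitute $z=Xx$ in the $W$-inverse inequality). Where the paper settles $iii)\Rightarrow i)$ and $i)\Rightarrow ii)$ by citing ``similar arguments'' from \cite{Contino33}, you supply self-contained proofs: the perturbation $X=X_m+tY$ plus monotonicity of $\tr_J$ on the positive cone (indeed $\tr_J(T)-\tr_J(S)=\tr(J(T-S))\geq 0$ when $S\leq T$ and $T-S\in S_1$), and for $i)\Rightarrow ii)$ a Fr\'echet-derivative argument, which is precisely the technique the paper itself uses in Proposition \ref{teo3}; note only that ``testing against $Y$ and $iY$'' needs the standard extra step of inserting the particular choice $Y=A^{*_J}B(AX_0-I)$ to pass from a vanishing trace pairing to a vanishing operator, exactly as in the paper's proof of Proposition \ref{teo3}. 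The genuinely different route is the identification of the minimum: the paper proves the minimizing operator equals $\max\{Z\in L(\HH)^{s}:\ Z\leq W,\ \ran Z\subseteq \ran A^{\perpi}\}$ and invokes \cite[Proposition 4.8]{Contino4}, whereas you compute blocks with respect to $\clran A\oplus_J(\clran A)^{\perp_J}$. The obstacle you flag does dissolve, because the normal equation itself provides a bounded solution $e_{12}$ of $ae_{12}=b$ (the $(1,2)$ block of $AX_0$), so you are always in the complementable case, never the merely weakly complementable one: the reduced solution of $|a|^{1/2}f=b$ is then $f=u|a|^{1/2}e_{12}$ (where $a=u|a|$ is the polar decomposition), giving $f^{*}uf=e_{12}^{*}ae_{12}=b^{*}e_{12}$, which is independent of the chosen solution $e_{12}$ and agrees with the definition of the Schur complement in \cite{AntCorSto06,Contino4}; moreover the $(2,1)$ block of your matrix vanishes since $b^{*}(e_{11}-1)=e_{12}^{*}a(e_{11}-1)=0$, so selfadjointness need not even be invoked there. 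In exchange for being explicit, your identification leans on the block description of the indefinite Schur complement; the paper's extremal characterization avoids block computations altogether and yields the maximality property of $W_{/[\clran A]}$ as a by-product.
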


\begin{proof}  $i) \Rightarrow ii)$: Suppose that $i)$ holds. Then, by similar arguments as those found in \cite[Theorem 5.8]{Contino33}, it follows that the normal equation $A^{\#}W(AX-I)=0$ admits a solution. So that, by Theorem \ref{sol global sii  A-inversa}, $ii)$ holds. 
	
$ii) \Rightarrow iii)$: If $ii)$ holds, by Theorem \ref{sol global sii  A-inversa}, $A$ admits an indefinite $W$-inverse, say $X_0 \in L(\HH, \KK).$ Then, for each $x \in \HH,$ 
$$\K{W(AX_0x-x)}{AX_0x-x} \leq \K{W(Az-x)}{Az-x} \textrm{ for every }z \in \KK.$$ In particular, given $X \in L(\HH, \KK),$ consider $z=Xx.$ So that, for every $x \in \HH,$
$$\K{W(AX_0-I)x}{(AX_0-I)x} \leq \K{W(AX-I)x}{(AX-I)x}.$$
Therefore, $ (AX_0-I)^{\#}W(AX_0-I)\leq (AX-I)^{\#}W(AX-I),$ for every $X \in L(\HH, \KK),$ and $iii)$ holds.

$iii)  \Rightarrow i)$: Suppose that there exists $\min_{X\in L(\HH,\KK)} (AX-I)^{\#}W(AX-I).$ Then, by similar arguments as those in the proof of \cite[Theorem 5.8]{Contino33}, there exists $\min_{X\in L(\HH,\KK)} \tr_{J}((AX-I)^{\#}W(AX-I))$ for any signature operator $J.$ Also, as in the proof of \cite[Theorem 4.4]{Contino33}, $X_0x$ is a $W$-ILSS of $Az=x$ for every $x \in \HH.$ Then,  by Theorem \ref{sol global sii  A-inversa}, $\ran A$ is $W$-nonnegative and $i)$ follows.

	
In this case, let $X_0 \in L(\HH,\KK)$ be a solution of problem \eqref{eqop}, i.e., $$(AX_0-I)^{\#}W(AX_0-I)=\underset{X\in L(\HH)}{\min} (AX-I)^{\#}W(AX-I).$$

In particular, $(AX_0-I)^{\#}W(AX_0-I) \in L(\HH)^{s}, $ $(AX_0-I)^{\#}W(AX_0-I) \leq W$ and, by Theorem \ref{sol global sii  A-inversa}, $A^{\#}W(AX_0-I)=0.$ 
But, since $$A^{\#}[(AX_0-I)^{\#}W(AX_0-I)]=A^{\#}X_0^{\#}A^{\#}W(AX_0-I)-A^{\#}W(AX_0-I)=0,$$ we have that $\ran((AX_0-I)^{\#}W(AX_0-I)))\subseteq \ran A^{\perpi}.$ Let $Z \in L(\HH)^{s}$ be such that $Z \leq W$ and $\ran Z \subseteq \ran A^{\perpi}.$ Then, since $A^{\#}Z=ZA=0,$ $$Z=(AX_0-I)^{\#}Z(AX_0-I) \leq (AX_0-I)^{\#}W(AX_0-I).$$ Therefore  
\begin{align*}
\underset{X\in L(\HH)}{\min} (AX-I)^{\#}W(AX-I)&=(AX_0-I)^{\#}W(AX_0-I)\\
&=\max \  \{ Z \in L(\HH)^{s}: \ Z \leq W,  \ran Z \subseteq \ran A^{\perpi}\}\\
&= W_{ /[\clran A]},
\end{align*}
because by Corollary \ref{corwc}, $\clran A$ is $W$-complementable, $\clran A$ is $W$-nonnegative  and we used \cite[Proposition 4.8]{Contino4}. Finally, by \cite[Theorem 5.8]{Contino33}, 
$$\min_{X\in L(\HH)} \tr_{J}((AX-I)^{\#}W(AX-I)=\tr_{J}((AX_0-I)^{\#}W(AX_0-I)=\tr_{J}(W_{ /[\clran A]}),$$ for any $X_0$ solution of problem \eqref{eqop}. 
\end{proof}

\medskip
The results of the section are collected in the next corollary.

\begin{cor} \label{CorA} Let  $A\in L(\KK, \mathcal H)$ and $W\in S_1 \cap L(\HH)^{s}.$  Then the following statements are equivalent:
	\begin{enumerate}
		\item [i)] $Az=x$ admits an $W$-ILSS for every $x \in \HH;$ 
		\item [ii)] $A$ admits an indefinite $W$-inverse, i.e., for every $x \in \HH,$ $Az=x$ admits a $W$-ILSS, $Gx,$ with $G \in L(\HH,\KK);$
		\item [iii)] $\ran A$ is $W$-nonnegative and  there exists $$\min_{X\in L(\HH,\KK)} \tr_{J}((AX-I)^{\#}W(AX-I))$$ for any signature operator $J.$
	\end{enumerate}
\end{cor}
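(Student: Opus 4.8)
The plan is to derive this corollary almost entirely from the two earlier results, Theorem \ref{sol global sii A-inversa} and Proposition \ref{PropA}, rather than reprove anything from scratch. The three statements to be connected are: $i)$ pointwise solvability of $Az=x$ for every $x$, $ii)$ existence of a bounded global (operator) solution $G$, and $iii)$ the $W$-nonnegativity of $\ran A$ together with the existence of the $J$-trace minimization. The key observation is that each of these appears, possibly verbatim, among the equivalent conditions already established: $i)$ is statement $i)$ of Theorem \ref{sol global sii A-inversa}, $ii)$ is statement $iv)$ of that same theorem (``$A$ admits an indefinite $W$-inverse''), and $iii)$ is precisely statement $i)$ of Proposition \ref{PropA}.

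With that identification in hand, the proof becomes a short chain of citations. First I would note that $i) \Leftrightarrow ii)$ is immediate from Theorem \ref{sol global sii A-inversa}, since that theorem shows $i)$ (pointwise solvability) is equivalent to $iv)$ (existence of an indefinite $W$-inverse), and $ii)$ is just a restatement of $iv)$. Then I would invoke Proposition \ref{PropA}: its condition $ii)$ is $\ran A$ is $W$-nonnegative together with $\ran A + [W(\ran A)]^{\perpi} = \HH$, which by Theorem \ref{sol global sii A-inversa} (equivalence of $i)$ and $ii)$ there) coincides with our $i)$; and its condition $i)$ is exactly our $iii)$. Hence $iii) \Leftrightarrow i)$ follows from the equivalence of conditions $i)$ and $ii)$ in Proposition \ref{PropA}, combined with Theorem \ref{sol global sii A-inversa}. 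I would present this as: by Theorem \ref{sol global sii A-inversa}, conditions $i)$ and $ii)$ here are equivalent to each other and to $\ran A + [W(\ran A)]^{\perpi} = \HH$ with $\ran A$ being $W$-nonnegative, which is condition $ii)$ of Proposition \ref{PropA}; and by Proposition \ref{PropA} this is in turn equivalent to condition $iii)$ here.

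The one point requiring care is the hypothesis bookkeeping: Proposition \ref{PropA} carries the standing assumption $W \in S_1 \cap L(\HH)^s$, which is exactly the hypothesis of this corollary, so the $J$-trace in $iii)$ is well defined and Proposition \ref{PropA} applies without modification. I would make sure the statement of $iii)$ matches Proposition \ref{PropA} in requiring the minimum to exist ``for any signature operator $J$'', so that no quantifier mismatch slips in. There is essentially no analytic obstacle here; the proof is a matter of correctly matching each of the three listed conditions to its counterpart in the two cited results. The main thing to guard against is the temptation to reprove the underlying equivalences (the $W$-nonnegativity and range-sum conditions), which would duplicate the content of Theorem \ref{sol global sii A-inversa} and Proposition \ref{PropA} unnecessarily; the corollary is meant to be a two-line consequence.

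Concretely, the proof I would write reads as follows.

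\begin{proof}
By Theorem \ref{sol global sii A-inversa}, statements $i)$ and $ii)$ are equivalent, and each is equivalent to the condition that $\ran A$ is $W$-nonnegative and $\ran A + [W(\ran A)]^{\perpi} = \HH.$ Since $W \in S_1 \cap L(\HH)^s,$ Proposition \ref{PropA} applies, and this last condition is precisely item $ii)$ of that proposition, which is equivalent to its item $i),$ i.e.\ to statement $iii)$ above. Therefore $i),$ $ii)$ and $iii)$ are equivalent.
\end{proof}
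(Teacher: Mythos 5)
Your proposal is correct and matches the paper's intent exactly: the paper gives no explicit proof, presenting the corollary as a collection of the section's results, and the intended argument is precisely the chain you wrote — $i)\Leftrightarrow ii)$ via Theorem \ref{sol global sii  A-inversa} ($i)\Leftrightarrow iv)$ there), and $i)\Leftrightarrow iii)$ by matching condition $ii)$ of that theorem with condition $ii)$ of Proposition \ref{PropA}. Your hypothesis bookkeeping ($W\in S_1\cap L(\HH)^s$ needed only for Proposition \ref{PropA}) and quantifier matching are also accurate.
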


\section{Global solutions of spline and  smoothing problems}
In this section, we analyze when the indefinite spline and smoothing problems admit global solutions. As in the section above, we relate the corresponding bounded global solutions to the solutions of the associated operator minimization problems considering the $J$-trace.

\subsection*{Indefinite splines problems}
Given a Krein space $\HH$ consider  $T \in L(\HH),$ $V \in L(\HH)$ and $f_0 \in \ran V$, we study the existence of
\begin{equation} \label{spline11}
\min \K{Tx}{Tx}, \mbox{ subject to } Vx=f_0. 
\end{equation}

Suppose that $Vh_0=f_0.$ Then problem \eqref{spline11} is equivalent to study when the set
\begin{equation} 
\label{spline}
\begin{split} 
\esp(T,V,h_0) \!:= \!\{ h \in h_0+ \ker V \!: \! \K{Th}{Th}=\underset{z \in \ker V}{\min} \K{T(h_0+z)}{T(h_0+z)}\} \!
\end{split}
\end{equation}
is not empty. Since $T$ and $V$ are fixed along this work,  $\esp(T, V,h_0)$ is shortened to $\esp(h_0).$
We will refer to problem \eqref{spline} as the \emph{indefinite abstract spline problem}.

\begin{obs}\label{remarksp}
Notice that $x_0$ is  solution of \eqref{spline11} if and only if $x_0 \in \esp(h_0).$ In fact, if $x_0$ is a solution of \eqref{spline11} then $Vx_0=Vh_0=f_0$ and $\K{Tx_0}{Tx_0}=\min \K{Tx}{Tx}$ for every $x \in \HH$ such that $Vx=f_0.$ Then $x_0=h_0+(x_0-h_0)\in h_0 + \ker V$ and, if $x \in \HH$ is such that $Vx=Vh_0=f_0$ then $x=h_0+z$ for some $z \in \ker V.$ Hence $x_0 \in \esp(h_0).$ 

Conversely, if $x_0 \in \esp(h_0)$ then $x_0=h_0+u$ for some $u \in \ker V$ so that $Vx_0=Vh_0=f_0$ and if $x:=h_0+z$ for some $z \in \ker V$ it holds that $Vx=Vh_0=f_0.$ Hence $x_0$ is a solution of \eqref{spline11}.
\end{obs}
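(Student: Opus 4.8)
The plan is to show that \eqref{spline11} and the membership $x_0 \in \esp(h_0)$ single out the same minimizers, by identifying the feasible set of \eqref{spline11} with the affine subspace $h_0 + \ker V$. The one substantive step is the set equality $\{x \in \HH : Vx = f_0\} = h_0 + \ker V$, which holds because $Vh_0 = f_0$: for any $x$, the condition $Vx = f_0$ is equivalent to $V(x - h_0) = 0$, that is, to $x - h_0 \in \ker V$. Thus the feasible points are exactly those of the form $h_0 + z$ with $z \in \ker V$, and minimizing $\K{Tx}{Tx}$ over the feasible set amounts to minimizing $\K{T(h_0+z)}{T(h_0+z)}$ over $z \in \ker V$.

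With this identification in hand, I would verify the two implications directly. For the forward direction, I would suppose that $x_0$ solves \eqref{spline11}; feasibility gives $Vx_0 = f_0 = Vh_0$, so $x_0 \in h_0 + \ker V$, and since $\K{Tx_0}{Tx_0}$ is the minimum of $\K{Tx}{Tx}$ over the feasible set, it coincides with $\min_{z \in \ker V}\K{T(h_0+z)}{T(h_0+z)}$; by the definition \eqref{spline} this places $x_0$ in $\esp(h_0)$. For the converse, I would assume $x_0 \in \esp(h_0)$, so that $x_0 = h_0 + u$ for some $u \in \ker V$ and $\K{Tx_0}{Tx_0}$ realizes the minimum over $h_0 + \ker V$; then $Vx_0 = Vh_0 = f_0$, so $x_0$ is feasible, and the minimum over $h_0 + \ker V$ equals the minimum over the feasible set, whence $x_0$ solves \eqref{spline11}.

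I expect no genuine obstacle here, as the statement is essentially a reindexing of a single optimization problem. Its entire content is the feasible-set identity above, guaranteed by the hypothesis $f_0 \in \ran V$, which provides the base point $h_0$ with $Vh_0 = f_0$. The only point meriting care is confirming that the two minimizations share the same value because they range over the same set, rather than over sets merely related by a translation.
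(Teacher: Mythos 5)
Your proposal is correct and follows essentially the same route as the paper: the whole content is the identification of the feasible set $\{x \in \HH : Vx = f_0\}$ with the affine subspace $h_0 + \ker V$, which the paper verifies by checking both inclusions inside each implication, while you state it once as a set equality and then reindex the minimization. The two arguments are the same in substance; your phrasing is slightly more economical.
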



%

The following result was proved in \cite[Lemma 3.4]{GiribetKreinsplines}.
\begin{lema}\label{Giribet} Let $f_0 \in \ran V$  and suppose that $Vx_0=f_0.$ Then $x_0$ is a solution of \eqref{spline11} if and only if $T(\ker V)$ is a nonnegative subspace of $\HH$ and $Tx_0\in [T(\ker V)]^{\perpi}.$
\end{lema}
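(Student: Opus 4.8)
The plan is to reduce the statement to a single variational inequality and then read off the two conditions from it. Since $Vx_0=f_0$ by hypothesis, the feasible set $\{x\in\HH:Vx=f_0\}$ is exactly the affine subspace $x_0+\ker V$. Writing a generic feasible point as $x=x_0+z$ with $z\in\ker V$ and expanding the Hermitian form,
\begin{equation*}
\K{T(x_0+z)}{T(x_0+z)}=\K{Tx_0}{Tx_0}+2\real\K{Tz}{Tx_0}+\K{Tz}{Tz}.
\end{equation*}
Hence $x_0$ is a solution of \eqref{spline11} precisely when
\begin{equation}\label{varineq}
2\real\K{Tz}{Tx_0}+\K{Tz}{Tz}\ge 0\qquad\text{for all }z\in\ker V.
\end{equation}
Thus the whole lemma amounts to showing that \eqref{varineq} is equivalent to the conjunction ``$T(\ker V)$ is nonnegative and $Tx_0\in[T(\ker V)]^{\perpi}$''.

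The reverse implication is immediate: if $Tx_0\in[T(\ker V)]^{\perpi}$ then $\K{Tz}{Tx_0}=0$ for every $z\in\ker V$, so the cross term in \eqref{varineq} drops out and \eqref{varineq} collapses to $\K{Tz}{Tz}\ge0$, which is exactly nonnegativity of $T(\ker V)$.

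For the forward implication I would first extract the orthogonality condition by a scaling/phase argument, the step I expect to be the only delicate one. Fixing $z\in\ker V$ and replacing $z$ by $\lambda z\in\ker V$ for $\lambda=te^{i\theta}$ turns \eqref{varineq} into $2t\,\real(e^{i\theta}\K{Tz}{Tx_0})+t^2\K{Tz}{Tz}\ge0$; choosing $\theta$ so that $e^{i\theta}\K{Tz}{Tx_0}=-|\K{Tz}{Tx_0}|$, dividing by $t>0$ and letting $t\to0^+$ forces $|\K{Tz}{Tx_0}|\le0$, i.e. $\K{Tz}{Tx_0}=0$. Since $z\in\ker V$ was arbitrary this yields $Tx_0\in[T(\ker V)]^{\perpi}$. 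With the cross term now known to vanish, \eqref{varineq} again reduces to $\K{Tz}{Tz}\ge0$ for all $z$, i.e. $T(\ker V)$ is nonnegative, completing the equivalence.

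As an alternative to the first reduction one could invoke Proposition \ref{propWLSS} with weight $W=I$ and $A=T|_{\ker V}$, reading \eqref{spline11} as an indefinite (unweighted) least squares problem with datum $-Tx_0$ whose solution is $u=0$; its characterization yields the same two conditions. I would nonetheless prefer the direct variational computation above, since it avoids having to regard the possibly degenerate subspace $\ker V$ as a Krein space in its own right, together with the attendant definition of the restricted adjoint $A^{\#}$.
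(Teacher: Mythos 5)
Your proof is correct, but it is worth noting that the paper itself offers no proof of Lemma \ref{Giribet} at all: it simply cites \cite[Lemma 3.4]{GiribetKreinsplines}, where the result is obtained through the general characterization of indefinite least-squares solutions (the route you sketch as your alternative). Concretely, the cited proof views $\min_{z\in\ker V}\K{T(x_0+z)}{T(x_0+z)}$ as an unweighted indefinite least-squares problem for the restriction of $T$ to $\ker V$ with datum $-Tx_0$, and then invokes the analogue of Proposition \ref{propWLSS}; your worry about $\ker V$ being a possibly degenerate subspace is legitimate but can be sidestepped by equipping $\ker V$ with the positive-definite inner product it inherits from an associated Hilbert space $(\HH,\PI{\cdot}{\cdot}_J)$ — a Hilbert space is in particular a Krein space, so the proposition applies verbatim, and the kernel of the resulting adjoint is exactly $[T(\ker V)]^{\perpi}$. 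Your primary argument instead re-proves the needed characterization from scratch: the expansion of the Hermitian form is right, the reduction to the variational inequality \eqref{varineq} is exact, and the phase-and-scaling step (choosing $\lambda=te^{i\theta}$ with $e^{i\theta}\K{Tz}{Tx_0}=-\vert\K{Tz}{Tx_0}\vert$, dividing by $t$ and letting $t\to0^+$) correctly forces $\K{Tz}{Tx_0}=0$ for every $z\in\ker V$, after which nonnegativity of $T(\ker V)$ falls out. What your route buys is a short, self-contained and elementary proof that uses no Krein-space machinery beyond the definition of the form; what the paper's citation route buys is uniformity — the lemma becomes a special case of the same normal-equation framework (Proposition \ref{propWLSS}) that drives Theorem \ref{sol global sii  A-inversa} and the rest of Sections 3 and 4, rather than a separate computation.
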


In order to obtain solutions of \eqref{spline} that depend continuously on $h$ we give the following definition.

\begin{Def} Let $T \in L(\HH)$ and $V \in L(\HH).$ An operator $G \in L(\HH)$ is a \emph{bounded global solution} of \eqref{spline}  if	
\begin{equation*} \label{splineglobal}
Gh \in \esp(h) \mbox{ for every } h \in \HH.
\end{equation*}
\end{Def}
\smallskip

We are also interested in comparing the bounded global solution of \eqref{spline} to the \emph{operator spline problem}: 
let $(\HH, \K{ \ }{ \ })$ be a Krein space with signature operator $J.$ Fix the associated Hilbert space $(\HH, \PI{ \ }{ \ }_J).$ Given $T \in S_1$ and $V, B_0 \in L(\HH)$ such that $\ran V \subseteq \ran B_0,$ analyze if there exists 
\begin{equation}
	\label{uno}
\underset{X \in L(\HH), \ \ VX=B_0}{\min} \tr_{J}(X^{\#}T^{\#}TX).
\end{equation}

\vspace{0,3cm}
We begin by studying problem \eqref{uno}. 

\begin{prop} \label{teo1} Let $T \in S_1,$ $V, B_0 \in L(\HH)$ such that $\ran V \subseteq \ran B_0$ and $T(\ker V)$ is a nonnegative subspace of $\HH.$ Then the following are equivalent:
	\begin{itemize}
		\item [i)] there exists $\underset{VX=B_0}{\min} \tr_{J}(X^{\#}T^{\#}TX)$ for any signature operator $J;$
		\item [ii)] $\ran(V^{\dagger_J} B_0) \subseteq \ker V + \left[ T^{\#}T(\ker V)\right]^{\perpi}$ for any signature operator $J;$
		\item [iii)] the normal equation 	\begin{equation} \label{Normal2}
			(P_{\ker V}^J)^{\#} T^{\#}T (P_{\ker V}^JX+V^{\dagger_J} B_0)=0
		\end{equation} admits a solution for any signature operator $J;$
	\end{itemize}	
	In this case, for any signature operator $J,$ $$\underset{VX=B_0}{\min} \tr_{J}(X^{\#}T^{\#}TX)=\tr_{J}(X_0^{\#}T^{\#}TX_0),$$ where
	$X_0$ is any solution of equation \eqref{Normal2}.
\end{prop}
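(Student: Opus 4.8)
The plan is to turn the constrained minimization \eqref{uno} into an unconstrained convex problem over $L(\HH)$ and to read off all three equivalences from one Fréchet‑derivative computation. First I would parametrize the feasible set: since $VX=B_0$ is consistent, $V^{\dagger_J}B_0$ is a particular solution and, using $I-V^{\dagger_J}V=P_{\ker V}^J$, every $X$ with $VX=B_0$ has the form $X=V^{\dagger_J}B_0+P_{\ker V}^J Y$ with $Y\in L(\HH)$ arbitrary. Writing $P:=P_{\ker V}^J$ and substituting, the objective becomes $F(Y):=\tr_J\big((V^{\dagger_J}B_0+PY)^{\#}T^{\#}T(V^{\dagger_J}B_0+PY)\big)$. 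Because $T\in S_1$, every product $A^{\#}T^{\#}TB$ with $A,B\in L(\HH)$ lies in $S_1$, so $F$ is well defined and real valued (the integrand is $\#$-selfadjoint, hence its $J$-trace is real) and is a bounded quadratic functional on $L(\HH)$, thus of class $\mc{C}^1$.

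Next I would expand $F=c+L(Y)+Q(Y)$ into its constant, linear and quadratic parts. The quadratic part is $Q(Y)=\tr_J\big((TPY)^{\#}(TPY)\big)$, and this is where the hypothesis that $T(\ker V)$ is nonnegative enters: it is equivalent to $P^{\#}T^{\#}TP\ge 0$ in the Krein sense, which forces $Q(Y)\ge 0$ for all $Y$. Hence $F$ is convex, and the identity $F(Y_0+H)=F(Y_0)+DF(Y_0)(H)+Q(H)$ shows that $Y_0$ is a global minimizer if and only if $DF(Y_0)=0$; in particular a minimum exists iff a critical point exists. A direct computation gives $DF(Y)(Z)=2\,\real\,\tr_J\big(Z^{\#}P^{\#}T^{\#}T(V^{\dagger_J}B_0+PY)\big)$. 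Setting this to zero for every $Z\in L(\HH)$ (and replacing $Z$ by $iZ$ to drop the real part) and invoking the nondegeneracy of the pairing $(S,R)\mapsto\tr_J(R^{\#}S)$ on $S_2$ forces $P^{\#}T^{\#}T(PY+V^{\dagger_J}B_0)=0$, which is exactly \eqref{Normal2}. This yields $i)\Leftrightarrow iii)$.

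For $iii)\Leftrightarrow ii)$ I would first rewrite the orthogonal companion in $ii)$: since $\ker V=\ran P$ one has $T^{\#}T(\ker V)=\ran(T^{\#}TP)$, whence $[T^{\#}T(\ker V)]^{\perpi}=\ker\big((T^{\#}TP)^{\#}\big)=\ker(P^{\#}T^{\#}T)$. The normal equation \eqref{Normal2} reads $P^{\#}T^{\#}TP\,X=-P^{\#}T^{\#}TV^{\dagger_J}B_0$, which by Douglas' Lemma \cite{Douglas} is solvable iff $\ran(P^{\#}T^{\#}TV^{\dagger_J}B_0)\subseteq\ran(P^{\#}T^{\#}TP)$. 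A short vectorwise argument (with $S:=P^{\#}T^{\#}T$, note that $Sx\in\ran(SP)$ iff $x\in\ran P+\ker S$) turns this range inclusion into $\ran(V^{\dagger_J}B_0)\subseteq\ker V+[T^{\#}T(\ker V)]^{\perpi}$, which is $ii)$. For the value of the minimum I would evaluate $F$ at the minimizer $P_{\ker V}^J X_0+V^{\dagger_J}B_0$ attached to a solution $X_0$ of \eqref{Normal2}; the normal equation annihilates the cross term $\tr_J\big(X_0^{\#}P^{\#}T^{\#}T(\cdots)\big)$, leaving the stated expression.

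The main obstacle I anticipate is the passage from the vanishing of $DF$ to the operator identity \eqref{Normal2}: it relies on all relevant products staying trace class (ensured by $T\in S_1$) and on the nondegeneracy of the $J$-trace pairing, so that $\tr_J(Z^{\#}N)=0$ for all $Z$ genuinely gives $N=0$. The convexity step, which rests on translating ``$T(\ker V)$ nonnegative'' into $P^{\#}T^{\#}TP\ge 0$ and then into nonnegativity of the $J$-trace quadratic form, is the conceptual key that makes critical points automatically global minima; by contrast, $ii)\Leftrightarrow iii)$ is essentially bookkeeping with Douglas' Lemma once the orthogonal companion has been rewritten as a kernel.
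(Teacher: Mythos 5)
Your proposal is correct, and its opening move coincides exactly with the paper's: both of you parametrize the feasible set as $X=P_{\ker V}^{J}Y+V^{\dagger_J}B_0$ and thereby turn \eqref{uno} into an unconstrained quadratic problem over $L(\HH)$. The divergence is in what happens next. The paper's written proof consists, after this reduction, of citations to \cite[Theorem 5.8]{Contino33} and \cite[Corollary 4.5]{Contino33}, which already package the equivalence of existence of the $J$-trace minimum, solvability of the normal equation, and the range inclusion (it routes $i)\Leftrightarrow ii)$ and $ii)\Leftrightarrow iii)$ through these results). You instead re-prove those black boxes: nonnegativity of $T(\ker V)$ gives $\tr_{J}\bigl((TP_{\ker V}^{J}Y)^{\#}TP_{\ker V}^{J}Y\bigr)\geq 0$, hence convexity, so global minima are exactly critical points; the Fr\'echet-derivative computation plus the nondegeneracy trick ($Z=JN$ gives $\tr_{J}(Z^{\#}N)=\tr(N^{*_J}N)=\Vert N\Vert_2^{2}$, legitimate since $N\in S_1$) identifies critical points with solutions of \eqref{Normal2}, yielding $i)\Leftrightarrow iii)$; and Douglas' Lemma \cite{Douglas} together with the identification $[T^{\#}T(\ker V)]^{\perpi}=\ker\bigl((P_{\ker V}^{J})^{\#}T^{\#}T\bigr)$ yields $iii)\Leftrightarrow ii)$. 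These are precisely the techniques the paper itself uses for the smoothing problem in Proposition \ref{teo3} (and which underlie the cited results), so your argument is a correct, self-contained inlining of the paper's citations: it costs length and care with trace-class details (which you supply), and it buys transparency about where each hypothesis enters --- in particular that nonnegativity of $T(\ker V)$ is exactly what promotes critical points to global minima.

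One caveat concerns the closing formula. Evaluating at the minimizer $\widetilde{X}=P_{\ker V}^{J}X_0+V^{\dagger_J}B_0$ and using \eqref{Normal2} to kill the cross term gives $\min=\tr_{J}\bigl((V^{\dagger_J}B_0)^{\#}T^{\#}T\widetilde{X}\bigr)=\tr_{J}(\widetilde{X}^{\#}T^{\#}T\widetilde{X})$; this equals the stated $\tr_{J}(X_0^{\#}T^{\#}TX_0)$ only when $X_0$ is itself feasible, i.e. $(I-P_{\ker V}^{J})X_0=V^{\dagger_J}B_0$. For an arbitrary solution of \eqref{Normal2} it can fail, because \eqref{Normal2} constrains only $P_{\ker V}^{J}X_0$: in $\mathbb{C}^{2}$ with $J=I$, $T=I$, $V=B_0=\mathrm{diag}(1,0)$, the operator $X_0=0$ solves \eqref{Normal2} while the minimum is $1$, not $0$. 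This imprecision is present in the paper's own statement (its subsequent theorem implicitly repairs it by choosing $X_0=P_{\ker V}^{J}Y_0+V^{\dagger_J}B_0$, which is feasible), so in your write-up you should state the minimum as the trace at the feasible minimizer $\widetilde{X}$ rather than claim it reproduces the printed expression verbatim. Relatedly, both you and the paper use consistency of $VX=B_0$, which requires $\ran B_0\subseteq\ran V$; the printed hypothesis $\ran V\subseteq\ran B_0$ is the reverse inclusion and is presumably a typo.
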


\begin{proof} Let $J$ be a signature operator of $\HH$ and fix the associated Hilbert space $(\HH, \PI{ \ }{ \ }_J).$
	
$i) \Leftrightarrow ii)$: If $VX=B_0$ then, by Douglas' Lemma \cite{Douglas}, $V^{\dagger_J}VX=P_{\ker V^{\perp}}^JX=V^{\dagger_J}B_0 \in L(\HH).$ Then $X=P_{\ker V}^JX +V^{\dagger_J}B_0$ and
\begin{equation*}
\min_{VX=B_0} \! \tr_{J}(X^{\#}T^{\#}TX) \! =\!\! \min_{X \in L(\HH)} \!
\tr_{J} [(P_{\ker V}^JX + V^{\dagger_J} B_0)^{\#}(T^{\#}T) (P_{\ker V^J}X + V^{\dagger_J} B_0)].
\end{equation*}
Then, by \cite[Theorem 5.8]{Contino33} and  \cite[Corollary 4.5]{Contino33}, problem \eqref{uno} admits a solution if only if $$\ran (V^{\dagger_J} B_0) \subseteq \ker V + \left[ T^{\#}T(\ker V)\right]^{\perpi}.$$
	
$ii) \Leftrightarrow iii)$: It follows from  \cite[Corollary 4.5]{Contino33}.
	
In this case, by \cite[Theorem 5.8]{Contino33} and \cite[Corollary 4.5]{Contino33}, for any signature operator $J,$ $$\underset{VX=B_0}{\min} \tr_{J}(X^{\#}T^{\#}TX)=\tr_{J}(X_0^{\#}T^{\#}TX_0),$$ where
$X_0$ is any solution of equation \eqref{Normal2}.
\end{proof}

\begin{prop} \label{prop11} Let $T \in S_1$ and $V, B_0 \in L(\HH)$ such that $\ran V \subseteq \ran B_0.$ 
Then there exists $X_0 \in L(\HH)$ such that, for any signature operator $J,$ $$X_0x \in \esp(V^{\dagger_J}B_0x) \mbox{ for every } x \in \HH$$ if and only if $X_0$ is a solution of  \eqref{uno} for any signature operator $J$ and $T(\ker V)$ is a nonnegative subspace of $\HH.$ 
\end{prop}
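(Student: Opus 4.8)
The plan is to prove both implications by \emph{decoupling} the trace functional in \eqref{uno} column by column and then exploiting the homogeneity of the spline problem. The basic tool is the identity
$$\tr_{J}(X^{\#}T^{\#}TX)=\sum_{n}\K{TXe_n}{TXe_n},$$
valid for every orthonormal basis $\{e_n\}$ of $(\HH,\PI{\ }{\ }_J)$; it follows from $\K{X^{\#}T^{\#}TXe_n}{e_n}=\K{TXe_n}{TXe_n}$ and the definition of $\tr_{J}$. Combined with the observation (already used in the proof of Proposition \ref{teo1}) that any $X$ with $VX=B_0$ splits as $X=P_{\ker V}^{J}X+V^{\dagger_J}B_0$, so that each column satisfies $Xe_n\in V^{\dagger_J}B_0e_n+\ker V$, this recasts \eqref{uno} as the simultaneous minimization of the forms $z\mapsto\K{T(V^{\dagger_J}B_0e_n+z)}{T(V^{\dagger_J}B_0e_n+z)}$ over $z\in\ker V$.

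For the direction from splines to operators, assume there is $X_0\in L(\HH)$ with $X_0x\in\esp(V^{\dagger_J}B_0x)$ for every $x$ and every $J$. Picking any $x$, the vector $X_0x$ solves \eqref{spline11} with datum $f_0=VX_0x\in\ran V$ (Remark \ref{remarksp}), whence Lemma \ref{Giribet} gives that $T(\ker V)$ is nonnegative. Feasibility is immediate: $X_0x\in V^{\dagger_J}B_0x+\ker V$ yields $VX_0x=VV^{\dagger_J}B_0x=B_0x$ (as $VV^{\dagger_J}B_0=B_0$), so $VX_0=B_0$. Finally, for any competitor $X$ with $VX=B_0$, the relation $X_0e_n\in\esp(V^{\dagger_J}B_0e_n)$ gives $\K{TX_0e_n}{TX_0e_n}\le\K{TXe_n}{TXe_n}$ for each $n$; summing over $n$ (all traces are finite because $T\in S_1$) and using the displayed identity shows $\tr_{J}(X_0^{\#}T^{\#}TX_0)\le\tr_{J}(X^{\#}T^{\#}TX)$, so $X_0$ solves \eqref{uno}.

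For the converse, fix $J$ and assume $X_0$ solves \eqref{uno} and $T(\ker V)$ is nonnegative. By Proposition \ref{teo1}, the existence of the minimum is equivalent to condition ii), which, read through Lemma \ref{Giribet}, says precisely that $\esp(V^{\dagger_J}B_0x)\neq\emptyset$ for every $x$; this produces the competitors needed below. Now fix $x\neq0$, set $e_1=x/\Vert x\Vert_J$ and complete it to an orthonormal basis. If $X_0e_1\notin\esp(V^{\dagger_J}B_0e_1)$, choose $\tilde x\in\esp(V^{\dagger_J}B_0e_1)$ with strictly smaller value and form the rank-one perturbation $X_1=X_0+\PI{\cdot}{e_1}_J(\tilde x-X_0e_1)$, which is feasible since $\tilde x-X_0e_1\in\ker V$. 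Only the $e_1$-column changes, so the displayed identity forces $\tr_{J}(X_1^{\#}T^{\#}TX_1)<\tr_{J}(X_0^{\#}T^{\#}TX_0)$, contradicting the minimality of $X_0$. Hence $X_0e_1\in\esp(V^{\dagger_J}B_0e_1)$, and the homogeneity $\esp(\lambda h_0)=\lambda\,\esp(h_0)$ upgrades this to $X_0x\in\esp(V^{\dagger_J}B_0x)$ for all $x$ (and all $J$).

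I expect the converse to be the delicate part. Global minimality of a trace is a priori weaker than optimality in every column, and the gap is closed only because the constraint $VX=B_0$ decouples across columns, so a single improved column can be installed by a bounded rank-one correction without breaking feasibility. The other point requiring care is the existence of each auxiliary minimum over $\ker V$: this is exactly where the nonnegativity of $T(\ker V)$ and the Schur-complement characterization behind Proposition \ref{teo1} are used, and it is what licenses the choice of the competitor $\tilde x$.
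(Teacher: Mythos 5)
Your proof is correct, but its second half takes a genuinely different route from the paper's, so let me compare. The direction from column-wise spline solutions to a solution of \eqref{uno} is essentially the paper's argument: Lemma \ref{Giribet} gives nonnegativity of $T(\ker V)$, membership $X_0x\in V^{\dagger_J}B_0x+\ker V$ gives feasibility $VX_0=B_0$, and summing the column-wise inequalities $\K{TX_0e_n}{TX_0e_n}\le\K{TXe_n}{TXe_n}$ over an orthonormal basis gives trace minimality. Your converse, however, is not the paper's. The paper passes through the normal equation: by Proposition \ref{teo1} a solution of \eqref{uno} solves \eqref{Normal2}, and Corollary 4.5 of \cite{Contino33} upgrades this to minimality of $(P_{\ker V}^JY_0+V^{\dagger_J}B_0)^{\#}T^{\#}T(P_{\ker V}^JY_0+V^{\dagger_J}B_0)$ in the positive-operator order; evaluating that operator inequality at each vector $x$ (choosing $Y$ with $Yx=z$) produces the spline property of every column. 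You avoid the operator-order result entirely: Proposition \ref{teo1} enters only to guarantee $\esp(V^{\dagger_J}B_0x)\neq\emptyset$ for every $x$, and then a rank-one exchange --- replacing the $e_1$-column of $X_0$ by a true spline solution, which preserves the constraint because the correction $\tilde x - X_0e_1$ lies in $\ker V$, and strictly lowers the absolutely convergent sum $\sum_n\K{TXe_n}{TXe_n}$ (absolute convergence holds since $X^{\#}T^{\#}TX\in S_1$) --- contradicts minimality; homogeneity of $\esp$ then passes from unit vectors to all of $\HH$. Both routes are sound. The paper's yields a stronger by-product, namely that the minimizer is minimal in the operator order, which is what later feeds the Schur-complement expression for the minimum value; yours is more elementary and self-contained, and it makes the mechanism transparent: the constraint $VX=B_0$ decouples across columns, so trace minimality forces optimality in every column. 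Two harmless loose ends you should still record: the case $x=0$, where one needs $0\in\esp(0)$ (true precisely because $T(\ker V)$ is nonnegative), and the strictness of the improvement in the exchange step, which holds because $X_0e_1$ lies in the feasible affine subspace $V^{\dagger_J}B_0e_1+\ker V$, so failing to belong to $\esp(V^{\dagger_J}B_0e_1)$ forces a value strictly above the attained minimum.
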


\begin{proof} Let $J$ be a signature operator of $\HH$ and fix the associated Hilbert space $(\HH, \PI{ \ }{ \ }_J).$ Suppose that $X_0 \in L(\HH)$ is a solution of \eqref{uno} and $T(\ker V)$ is a nonnegative subspace of $\HH.$  Since $VX_0=B_0,$ there exists $Y_0 \in L(\HH)$ such that $X_0=P_{\ker V}^JY_0+V^{\dagger_J}B_0$ and, by Proposition \ref{teo1}, $X_0$ is a solution of \eqref{Normal2}. But $P_{\ker V}^JY_0=P_{\ker V}^JX_0,$ so $Y_0$ is also a solution of \eqref{Normal2}. Then, by \cite[Corollary 4.5]{Contino33},
$$(P_{\ker V}^JY_0+V^{\dagger_J} B_0)^{\#}T^{\#}T(P_{\ker V}^JY_0+V^{\dagger_J} B_0) \! \leq \! (P_{\ker V}^JY+V^{\dagger_J} B_0)^{\#} T^{\#}T (P_{\ker V}^JY+V^{\dagger_J} B_0),\!$$ for every $Y \in L(\HH).$ Or equivalently, 
\begin{align*}
&\K{T(P_{\ker V}^JY_0+V^{\dagger_J} B_0) x}{T(P_{\ker V}^JY_0+V^{\dagger_J} B_0) x}\\
&\leq \K{T(P_{\ker V}^JY+V^{\dagger_J} B_0) x}{T(P_{\ker V}^JY+V^{\dagger_J} B_0) x},
\end{align*} for every $x \in \HH$ and $Y \in L(\HH).$
Let $ z \in \HH$ be arbitrary. For every $x \in \HH \setminus \{ 0\},$ there exists $Y\in L(\HH)$ such that $z=Yx.$ Therefore
\begin{align*}
&\K{T(P_{\ker V}^JY_0x+V^{\dagger_J} B_0x)}{T(P_{\ker V}^JY_0x+V^{\dagger_J} B_0x)}\\
&\leq \K{T(P_{\ker V}^Jz+V^{\dagger_J} B_0x)}{T(P_{\ker V}^Jz+V^{\dagger_J} B_0x)},
\end{align*} for every $x, z \in \HH.$ 
Then $X_0x \in  V^{\dagger_J} B_0x+ \ker V,$  
$$\K{TX_0x}{TX_0x}  \leq \K{Th}{Th}, \mbox{ for every } h  \in V^{\dagger_J} B_0x+ \ker V$$ and $X_0x \in \esp(V^{\dagger_J}B_0x).$
	
Conversely, suppose that, $X_0x \in \esp(V^{\dagger_J}B_0x) \mbox{ for every } x \in \HH$ and  for any signature operator $J.$ Then, by Lemma \ref{Giribet}, $T(\ker V)$ is a nonnegative subspace of $\HH.$ Also, for every $x \in \HH$ and any signature operator $J,$ $X_0x \in V^{\dagger_J} B_0x+ \ker V$ and 
$$\K{TX_0x}{TX_0x}  \leq \K{Th}{Th}, \mbox{ for every } h  \in V^{\dagger_J} B_0x+ \ker V.$$ It follows that $VX_0=B_0,$ and for any signature operator $J,$ 
	$$\K{TX_0x}{TX_0x}  \leq \K{T(P_{\ker V}^Jz+V^{\dagger_J} B_0x)}{T(P_{\ker V}^Jz+V^{\dagger_J} B_0x)},$$ for every $x, z \in \HH.$ In particular, given $Y \in L(\HH),$ consider $z=Yx.$ Then, for any signature operator $J,$  
	$$\K{TX_0x}{TX_0x}  \leq \K{T(P_{\ker V}^JY+V^{\dagger_J} B_0)x}{T(P_{\ker V}^JY+V^{\dagger_J} B_0)x},$$ for every $x\in \HH$ and $Y \in L(\HH).$ Fix $J$ and let $\{e_n : n \in \mathbb{N}\}$ be any orthonormal basis in $(\HH, \PI{ \ }{ \ }_J).$ Then 
	$$\K{TX_0e_n}{TX_0e_n}  \leq \K{T(P_{\ker V}^JY+V^{\dagger_J} B_0)e_n}{T(P_{\ker V}^JY+V^{\dagger_J} B_0)e_n},$$ for every $n\in \mathbb{N}$ and $Y \in L(\HH).$ Hence
	$$\tr_{J}(X_0T^{\#}TX_0) \leq \tr_{J}((P_{\ker V}^JY+V^{\dagger_J} B_0)^{\#}T^{\#}T(P_{\ker V}^JY+V^{\dagger_J} B_0)),$$ for every $Y \in L(\HH).$
	Therefore, $X_0$ is a solution of \eqref{uno}.
\end{proof}

Next we give necessary and sufficient conditions for the operator spline problem \eqref{uno} to have a solution for every $B_0 \in L(\HH).$ This result also shows that the condition that guarantees the existence of a bounded global solution of the indefinite spline problem \eqref{spline} is equivalent to the operator spline problem \eqref{uno} to have a solution for every $B_0 \in L(\HH).$

\begin{thm} Let $T \in S_1$ and $V \in L(\HH).$ Then the following are equivalent:
	\begin{enumerate}
		\item [i)] there exists $\underset{VX=B_0}{\min}\tr_{J}(X^{\#}T^{\#}TX)$ for any signature operator $J,$ for every $B_0 \in L(\HH)$ such that $\ran B_0 \subseteq \ran V$ and $T(\ker V)$ is a nonnegative subspace of $\HH;$   
		\item [ii)] there exists a bounded global solution of \eqref{spline};
		\item [iii)] $T^{\#}T$ is $\ker V$ complementable and $T(\ker V)$ is a nonnegative subspace of $\HH;$   
		\item [iv)]  $\esp(h_0)$ is nonempty for every $h_0\in \HH.$ 
	\end{enumerate} 
In this case, for any signature operator $J,$ 
	$$\underset{VX=B_0}{\min}\tr_{J}(X^{\#}T^{\#}TX)=\tr_{J}(X_0^{\#}T^{\#}TX_0)=\tr_{J}((V^{\dagger} B_0)^{\#}(T^{\#}T)_{/ [\ker V]}(V^{\dagger} B_0)),$$ where
	$X_0$ is any solution of equation \eqref{Normal2}.
\end{thm}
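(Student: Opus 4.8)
The plan is to reduce the entire statement to the weighted least squares theory of Section \ref{section3} by viewing the spline problem as a $W$-ILSS problem for a specific operator. Let $A\colon \ker V \to \HH$ be the inclusion of $\ker V$ (bounded, with $\ran A = \ker V$ closed) and set $W := T^{\#}T$. Since $T \in S_1$ and $T^{\#}$ is bounded, $W \in S_1 \cap L(\HH)^{s}$, so Theorem \ref{sol global sii  A-inversa}, Proposition \ref{PropA} and Corollary \ref{CorA} all apply to the pair $(A,W)$. For this $A$ one has $\K{W(Au-x)}{Au-x} = \K{T(u-x)}{T(u-x)}$ for $u \in \ker V$; writing $x=-h_0$, $u=z$ shows that $z$ is a $W$-ILSS of $Az=-h_0$ exactly when $h_0+z \in \esp(h_0)$ (cf. Remark \ref{remarksp} and Lemma \ref{Giribet}). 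Hence, as $h_0$ ranges over $\HH$, the assertion ``$\esp(h_0)\neq\emptyset$ for every $h_0$'' is precisely ``$Az=x$ admits a $W$-ILSS for every $x\in\HH$''.

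With this dictionary I would prove the cycle iii)$\Rightarrow$i)$\Rightarrow$ii)$\Rightarrow$iv)$\Rightarrow$iii). For iv)$\Leftrightarrow$iii): by Theorem \ref{sol global sii  A-inversa} (i $\Leftrightarrow$ ii there), solvability for all $x$ is equivalent to $\ran A=\ker V$ being $W$-nonnegative and $\ran A + [W(\ran A)]^{\perpi}=\HH$; but $\ker V$ is $T^{\#}T$-nonnegative means $\K{Ts}{Ts}\geq 0$ on $\ker V$, i.e. $T(\ker V)$ is nonnegative, and $\ker V + [T^{\#}T(\ker V)]^{\perpi}=\HH$ is exactly the $\ker V$-complementability of $T^{\#}T$. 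For iii)$\Rightarrow$i): complementability gives $\ker V + [T^{\#}T(\ker V)]^{\perpi}=\HH$, so for every feasible $B_0$ (so that $V^{\dagger_J}B_0\in L(\HH)$) the containment in Proposition \ref{teo1} ii) is automatic and the minimum in \eqref{uno} exists; together with $T(\ker V)$ nonnegative this is i). For i)$\Rightarrow$ii): specialize i) to $B_0=V$ to obtain a solution $X_0$ of \eqref{uno}; since $V^{\dagger_J}Vx$ and $x$ lie in the same coset modulo $\ker V$, we have $\esp(V^{\dagger_J}Vx)=\esp(x)$, so Proposition \ref{prop11} yields $X_0x\in\esp(x)$ for all $x$, i.e. a bounded global solution of \eqref{spline}. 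Finally ii)$\Rightarrow$iv) is immediate, since $Gh_0\in\esp(h_0)$ forces $\esp(h_0)\neq\emptyset$.

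For the minimum value, assume the equivalent conditions hold and let $X_0$ solve \eqref{Normal2}; the first equality $\min = \tr_{J}(X_0^{\#}T^{\#}TX_0)$ is Proposition \ref{teo1}, and Proposition \ref{prop11} gives $X_0x\in\esp(V^{\dagger_J}B_0x)$ for every $x$. The remaining task is to identify the pointwise optimal value with a Schur complement quadratic form. Applying Proposition \ref{PropA} to $A=\iota_{\ker V}$ and $W=T^{\#}T$ gives $\min_{X}(AX-I)^{\#}W(AX-I)=W_{/[\clran A]}=(T^{\#}T)_{/[\ker V]}$; evaluating at a vector and minimizing over $\ker V$ yields the variational identity $\min_{v\in\ker V}\K{T(w+v)}{T(w+v)}=\K{(T^{\#}T)_{/[\ker V]}w}{w}$. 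Taking $w=V^{\dagger_J}B_0x$ and using $X_0x\in\esp(V^{\dagger_J}B_0x)$ gives $\K{TX_0x}{TX_0x}=\K{(T^{\#}T)_{/[\ker V]}V^{\dagger_J}B_0x}{V^{\dagger_J}B_0x}$; summing over an orthonormal basis of $(\HH,\PI{\cdot}{\cdot}_J)$ and recalling $\tr_{J}(C^{\#}MC)=\sum_n \K{MCe_n}{Ce_n}$ produces $\tr_{J}(X_0^{\#}T^{\#}TX_0)=\tr_{J}((V^{\dagger_J}B_0)^{\#}(T^{\#}T)_{/[\ker V]}V^{\dagger_J}B_0)$. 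The $J$-independence of $(T^{\#}T)_{/[\ker V]}$ (from \cite{Contino4}) and of the reduced solution lets us write $V^{\dagger}B_0$ in the final formula.

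The hard part will not be the equivalences, which become bookkeeping once the reduction $A=\iota_{\ker V}$, $W=T^{\#}T$ is in place; the genuinely delicate point is the last formula — specifically, justifying the variational identity for the Krein Schur complement and the interchange of the operator minimization with the infinite trace sum. This interchange is legitimate precisely because the standing hypotheses furnish a single bounded operator $X_0$ realizing the pointwise minima simultaneously, so that the termwise equalities sum to the trace identity, while the $S_1$ hypothesis on $T$ (hence on $W$ and on $(T^{\#}T)_{/[\ker V]}$) guarantees every trace involved is finite.
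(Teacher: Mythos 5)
Your proposal is correct in substance, and its organizing idea --- reducing the whole spline problem to the weighted least squares theory of Section \ref{section3} via the inclusion $A=\iota_{\ker V}$ and $W=T^{\#}T$ --- is genuinely different from the paper's proof. The paper never invokes Theorem \ref{sol global sii  A-inversa} here: it proves $i)\Leftrightarrow ii)$ and $i)\Leftrightarrow iii)$ through Propositions \ref{prop11} and \ref{teo1} (essentially as you do for $iii)\Rightarrow i)\Rightarrow ii)$, except that it builds the global solution as $G:=X_0P^{J}_{\ker V^{\perp}}$ rather than observing $\esp(V^{\dagger_J}Vx)=\esp(x)$ directly), but it establishes $iii)\Leftrightarrow iv)$ by a direct subspace computation with Lemma \ref{Giribet}: from $Tu\in [T(\ker V)]^{\perpi}$ one gets $u\in [T^{\#}T(\ker V)]^{\perpi}$, hence $\ran V=V\left([T^{\#}T(\ker V)]^{\perpi}\right)$, and pulling back through $V^{-1}$ gives $\HH=\ker V+[T^{\#}T(\ker V)]^{\perpi}$. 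For the closed-form value, the paper cites the proof of \cite[Corollary 4.6]{Contino33}, which supplies a solution $Y_0$ of \eqref{Normal2} with $(V^{\dagger_J}B_0)^{\#}(T^{\#}T)_{/ [\ker V]}(V^{\dagger_J}B_0)=(P^{J}_{\ker V}Y_0+V^{\dagger_J}B_0)^{\#}T^{\#}T(P^{J}_{\ker V}Y_0+V^{\dagger_J}B_0)$; your alternative --- extracting the variational identity $\min_{v\in\ker V}\K{T(w+v)}{T(w+v)}=\K{(T^{\#}T)_{/ [\ker V]}w}{w}$ from Proposition \ref{PropA} applied to $(\iota_{\ker V},T^{\#}T)$ and then summing termwise (legitimate: both operators lie in $S_1$, and in fact termwise equality of the quadratic forms plus polarization gives equality of the operators, not just of the traces) --- is self-contained and arguably cleaner. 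What the paper's route buys in exchange is that it never has to treat $\ker V$ as the domain of an operator between Krein spaces.

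That last point is the one real gap in your write-up as it stands: to apply Theorem \ref{sol global sii  A-inversa}, Proposition \ref{PropA} and Corollary \ref{CorA} to $A=\iota_{\ker V}\in L(\KK,\HH)$ you must say which Krein space structure $\KK=\ker V$ carries. The inherited indefinite metric will not do in general: a closed subspace of a Krein space is a Krein space under the restricted metric only when it is regular (compare the remark following Corollary \ref{corwc}), and nothing in the hypotheses forces $\ker V$ to be regular, so the adjoint $\iota^{\#}$ need not exist in that sense. The fix is one line and should be stated explicitly: fix a signature operator $J$ and endow $\ker V$ with $\PI{\cdot}{\cdot}_J$, making it a Hilbert space and hence a Krein space, so that $\iota^{\#}=P^{J}_{\ker V}J$; then every condition you import from Section \ref{section3} --- the definition of $W$-ILSS, condition $ii)$ of Theorem \ref{sol global sii  A-inversa} (namely $\ran A+[W(\ran A)]^{\perpi}=\HH$ and $W$-nonnegativity of $\ran A$), and the conclusions of Proposition \ref{PropA} --- depends only on $\ran A=\ker V$ and on the metric of $\HH$, not on the metric chosen for the domain $\KK$. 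With that sentence added, your dictionary and the whole argument go through.
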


\begin{proof} $i) \Leftrightarrow ii):$ Let $J$ be a signature operator of $\HH$ and fix the associated Hilbert space $(\HH, \PI{ \ }{ \ }_J).$
Suppose that $T(\ker V)$ is a nonnegative subspace of $\HH$ and  $X_0 \in L(\HH)$ is a solution of \eqref{uno} for $B_0=V.$ Consider $G:=X_0P_{\ker V^{\perp}}^J \in L(\HH).$ Then, by Proposition \ref{prop11}, 
	$$
	Gh=X_0(P_{\ker V^{\perp}}^Jh) \in \esp(P_{\ker V^{\perp}}^Jh) \mbox { for every }  h \in \HH.
	$$
	Note that $\esp(P_{\ker V^{\perp}}^Jh)=\esp(h)$, because $P_{\ker V^{\perp}}^Jh+\ker V=h+\ker V.$
	Hence
	$$
	Gh \in \esp(h) \mbox { for every }  h \in \HH,
	$$
	so that $G$ is a bounded global solution of \eqref{spline}.
	
	Conversely, suppose that $G \in L(\HH)$ is a bounded global solution of \eqref{spline} and $B_0 \in L(\HH).$ Let $J$ be any signature operator and let $X_0:=GV^{\dagger_J}B_0 \in L(\HH).$ Then
	$$X_0x=G(V^{\dagger_J}B_0x) \in \esp(V^{\dagger_J}B_0x) \mbox{ for every } x \in \HH.$$ Therefore, by Proposition \ref{prop11}, $X_0$ is a solution of \eqref{uno} for any signature operator $J$ and $T(\ker V)$ is a nonnegative subspace of $\HH.$
	
	$i) \Leftrightarrow iii):$ Let $J$ be a signature operator of $\HH$ and fix the associated Hilbert space $(\HH, \PI{ \ }{ \ }_J).$ Suppose that \eqref{uno} has a solution for every $B_0 \in L(\HH)$ such $\ran B_0 \subseteq \ran V.$  Then, by Proposition \ref{teo1}, $$\ran(V^{\dagger_J} B_0) \subseteq \ker V + \left[ T^{\#}T(\ker V)\right]^{\perpi}$$ for every $B_0$ such that $\ran B_0 \subseteq \ran V.$  
	Consider $B_0=V,$ then 
	$\ker V^{\perp_J}= \ran (V^{\dagger} V) \subseteq \ker V + \left[ T^{\#}T(\ker V)\right]^{\perpi}.$ 
	So that
	$\HH=\ker V \oplus_J \ker V^{\perp_J} \subseteq \ker V + \left[ T^{\#}T(\ker V)\right]^{\perpi}$ and $T^{\#}T$ is $\ker V$ complementable.
	
	Conversely, suppose that $T^{\#}T$ is $\ker V$ complementable.  Then 
	$$\HH=  \ker V + \left[ T^{\#}T(\ker V)\right]^{\perpi}$$ and $\ran (V^{\dagger_J} B_0)  \subseteq \ker V + \left[ T^{\#}T(\ker V)\right]^{\perpi}$ for any signature operator $J$ and for every $B_0$ such that $\ran B_0 \subseteq \ran V.$ Then, by Proposition \ref{teo1},  \eqref{uno} has a solution for any signature operator $J$ and for every $B_0 \in L(\HH)$ such that $\ran B_0 \subseteq \ran V.$ 
	
	$iii) \Leftrightarrow iv):$ Suppose that $\esp(h_0)$ is nonempty for every $h_0\in \HH.$ Let $y \in \ran V,$ then $y=Vh_0 \in \HH$ for some $h_0,$ and there exists $u \in \HH$ such that $u \in \esp(h_0).$ 
	Then, by Lemma \ref{Giribet}, $T(\ker V)$ is a nonnegative subspace of $\HH$ and $Tu\in [T(\ker V)]^{\perpi}=(T^{\#})^{-1}(\ker V^{\perpi}).$ So $u \in  (T^{\#}T)^{-1}(\ker V^{\perpi})=\left[T^{\#}T(\ker V)\right]^{\perpi}.$ Hence $$y=Vh_0=Vu \in V(\left[ T^{\#}T(\ker V)\right]^{\perpi}).$$ So that $\ran V \subseteq V\left(\left[T^{\#}T(\ker V)\right]^{\perpi}\right) \subseteq \ran V,$ and
	$$\HH=V^{-1}(\ran V)=V^{-1}\left(V\left(\left[T^{\#}T(\ker V)\right]^{\perpi}\right)\right)=\ker V+ \left[T^{\#}T(\ker V)\right]^{\perpi}.$$ Therefore $T^{\#}T$ is $\ker V$ complementable. 
	
	Conversely, if $iii)$ holds then $\ran V=V(\HH)=V(\left[T^{\#}T(\ker V)\right]^{\perpi}).$ Let $h_0 \in \HH,$ then $Vh_0 \in V(\left[T^{\#}T(\ker V)\right]^{\perpi}).$ So that $h_0 \in \ker V+\left[T^{\#}T(\ker V)\right]^{\perpi}.$ That is, $h_0=x_0+z$ for some $x_0 \in \left[T^{\#}T(\ker V)\right]^{\perpi}$ and $z \in \ker V.$  But, since $Vx_0=Vh_0$ and $Tx_0 \in \left[T(\ker V)\right]^{\perpi},$ by Lemma \ref{Giribet}, $x_0$ is a solution of $\min \K{Tx}{Tx} \mbox{ subject to } Vx=Vh_0.$ Hence, by similar arguments as those found in Remark \ref{remarksp}, $x_0 \in \esp(h_0).$

	In this case, by Proposition \ref{teo1}, for any signature operator $J$ and every $B_0 \in L(\HH)$ such that $\ran B_0 \subseteq \ran V,$ 
	 $$\underset{VX=B_0}{\min} \tr_{J}(X^{\#}T^{\#}TX)=\tr_{J}(X_0^{\#}T^{\#}TX_0),$$  where
	$X_0$ is any solution of equation \eqref{Normal2}.
		
	By the proof of \cite[Corollary 4.6]{Contino33}, there exists $Y_0 \in L(\HH)$ such that $Y_0$ is a solution of equation \eqref{Normal2} and, for any signature operator $J,$
	$$(V^{\dagger_J} B_0)^{\#}(T^{\#}T)_{/ [\ker V]}(V^{\dagger_J} B_0)=(P_{\ker V}^JY_0+V^{\dagger_J}B_0)^{\#}T^{\#}T(P_{\ker V}^JY_0+V^{\dagger_J}B_0).$$
	Take $X_0:=P_{\ker V}^JY_0+V^{\dagger_J}B_0,$ then $X_0$ is also a solution of \eqref{Normal2}. Hence, for any signature operator $J,$
	$$\underset{VX=B_0}{\min}\tr_{J}(X^{\#}T^{\#}TX)=\tr_{J}(X_0^{\#}T^{\#}TX_0)=\tr_{J}((V^{\dagger_J} B_0)^{\#}(T^{\#}T)_{/ [\ker V]}(V^{\dagger_J} B_0)).$$
\end{proof}

\subsection*{Smoothing problems}

Let $T, V \in L(\HH),$ $\rho \in \mathbb{R}\setminus \{0\}$ and $h_0\in \HH.$  A problem that is naturally associated with \eqref{spline} is to find
\begin{equation} 
\label{CSP}
	\underset{x\in \HH}{\min}(\K{Tx}{Tx}+\rho \K{Vx-h_0}{Vx-h_0}).
\end{equation} 
We will refer to \eqref{CSP} as the \textit{indefinite smoothing problem} and its solutions are called \textit{indefinite smoothing splines}. 

From now on $\rho \in \mathbb{R} \setminus\{0\}.$ As before, we also study the problem of finding a \emph{bounded global solution} of problem \eqref{CSP}; i.e., we analyze if there exists an operator $G\in L(\HH)$ such that
\begin{equation*} \label{GlobalCSP}
\K{TGh}{TGh}+\rho \K{VGh-h}{VGh-h} \!=\!	\underset{x\in \HH}{\min}(\K{Tx}{Tx}+\rho \K{Vx-h}{Vx-h}), 
\end{equation*}
for every  $h \in \HH.$


We are interested in characterizing bounded global solutions of \eqref{CSP} and comparing them with the solutions of the  following \emph{indefinite operator smoothing} problem: let $(\HH, \K{ \ }{ \ })$ be a Krein space with signature operator $J$ and fix the associated Hilbert space $(\HH, \PI{ \ }{ \ }_J).$ Given $T, V, B_0 \in S_1$ analyze the existence of
\begin{equation} 
\label{dos}
	\underset{X \in L(\HH)}{\min} \ [\tr_{J}((TX)^{\#}TX)+ \rho \tr_{J}((VX-B_0)^{\#}(VX-B_0))].
\end{equation}

\smallskip

Define $K, B_0': \HH \ra \HH \times \HH,$ 
\begin{eqnarray}
	Kh=(Th,Vh) \mbox{ for } h \in \HH, \label{eqK} \\ 
	B_0'h=(0,B_0h) \mbox{ for } h \in \HH \label{eqB}.
\end{eqnarray}
We consider the indefinite inner product on $\HH \times \HH$ 
\begin{equation} \label{eqHH}
\K{(h_1,h_2)}{(f_1,f_2)}_\rho:=\K{h_1}{f_1}+\rho\K{h_2}{f_2},
\end{equation} $(h_1,h_2),(f_1,f_2) \in \HH \times \HH.$ It is easy to see that $(\HH\times \HH, \K{\cdot}{\cdot}_{\rho})$ is a Krein space.

It is straightforward to check that $K^{\#}: \HH \times \HH \ra \HH,$ is $$K^{\#}(h_1,h_2)=T^{\#}h_1+\rho V^{\#}h_2 \mbox{ for } (h_1,h_2) \in \HH \times \HH$$ and $B_0'^{\#}: \HH \times \HH \ra \HH,$ is $$B_0'^{\#}(h_1,h_2)=\rho B_0^{\#}h_2 \mbox{ for } (h_1,h_2) \in \HH \times \HH.$$

\begin{obs} \label{remark1} It holds that $\ran K$ is nonnegative if and only if $T^{\#}T+\rho V^{\#}V \in L(\HH)^+$.
\end{obs}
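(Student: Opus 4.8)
The plan is to reduce the nonnegativity of the subspace $\ran K$ to the positivity of a single $\#$-selfadjoint operator, namely $K^{\#}K$, and then to identify that operator with $T^{\#}T+\rho V^{\#}V$. First I would unwind the definition: by definition, $\ran K$ is a nonnegative subspace of the Krein space $(\HH\times\HH,\K{\cdot}{\cdot}_\rho)$ precisely when $\K{v}{v}_\rho\geq 0$ for every $v\in\ran K$, and since every element of $\ran K$ has the form $Kh$ for some $h\in\HH$, this is equivalent to requiring
$$\K{Kh}{Kh}_\rho\geq 0 \quad\text{for every } h\in\HH.$$

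Next I would carry out the key computation. Using the definition of the inner product \eqref{eqHH} and of $K$ in \eqref{eqK},
$$\K{Kh}{Kh}_\rho=\K{Th}{Th}+\rho\K{Vh}{Vh}.$$
Invoking the $\#$-adjoint relation $\K{Kh}{w}_\rho=\K{h}{K^{\#}w}$ with $w=Kh$, together with the formula $K^{\#}(h_1,h_2)=T^{\#}h_1+\rho V^{\#}h_2$ recorded just before the statement, I would observe that $K^{\#}Kh=T^{\#}Th+\rho V^{\#}Vh$, that is, $K^{\#}K=T^{\#}T+\rho V^{\#}V$. Since this operator is $\#$-selfadjoint (being a real combination of the $\#$-selfadjoint operators $T^{\#}T$ and $V^{\#}V$, with $\rho\in\RR$), the quantity $\K{(T^{\#}T+\rho V^{\#}V)h}{h}$ is real and one gets
$$\K{Kh}{Kh}_\rho=\K{h}{K^{\#}Kh}=\K{(T^{\#}T+\rho V^{\#}V)h}{h}\quad\text{for every } h\in\HH.$$

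Finally, I would read off the equivalence from this identity: the left-hand side is nonnegative for every $h$ if and only if the right-hand side is, and the latter condition is exactly the statement that the $\#$-selfadjoint operator $T^{\#}T+\rho V^{\#}V$ is positive, i.e.\ $T^{\#}T+\rho V^{\#}V\in L(\HH)^+$. There is no serious obstacle here; the argument is essentially the operator identity $K^{\#}K=T^{\#}T+\rho V^{\#}V$ combined with the definition of a nonnegative subspace. The only point demanding a little care is the bookkeeping between the two indefinite inner products involved (the $\rho$-product on $\HH\times\HH$ versus the original product on $\HH$), so that the $\#$-adjoint of $K$ is taken with respect to the correct pairing and the identity $\K{Kh}{w}_\rho=\K{h}{K^{\#}w}$ is applied with the right signs.
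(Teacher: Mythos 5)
Your argument is correct and is essentially the verification the paper intends: the paper states this as an unproved remark immediately after computing $K^{\#}$, and your identity $\K{Kh}{Kh}_\rho=\K{Th}{Th}+\rho\K{Vh}{Vh}=\K{(T^{\#}T+\rho V^{\#}V)h}{h}$, together with the observation that $T^{\#}T+\rho V^{\#}V$ is $\#$-selfadjoint, is precisely the straightforward check being relied upon. The only point worth flagging is that $L(\HH)^+$ must here be read as the cone of positive operators in the Krein-space sense (positivity of $\K{Wx}{x}$, per the paper's definition in the preliminaries), which is indeed how you read it.
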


\begin{lema}  \label{teo2} Let $T, V, B_0 \in L(\HH).$ Set $K$ and $B_0'$ as in \eqref{eqK} and \eqref{eqB}. Then there exists $X_0\in L(\HH)$ such that \begin{equation} \label{lemaK}
(KX_0-B_0')^{\#}(KX_0-B_0')=\underset{X \in L(\HH)}{\min}(KX-B_0')^{\#}(KX-B_0'),
\end{equation} 
where the order is the one induced in $L(\HH)$ by the cone of positive operators, if and only if $T^{\#}T+\rho V^{\#}V \in L(\HH)^+$ and $X_0$ is a solution of the normal equation
	\begin{equation} \label{Normal1}
		(T^{\#}T+\rho V^{\#}V)X=\rho V^{\#}B_0.
	\end{equation}	
\end{lema}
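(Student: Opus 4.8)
The plan is to recast problem \eqref{lemaK} as a single operator least squares problem for $K$ on the Krein space $(\HH\times\HH,\K{\cdot}{\cdot}_\rho)$ and to settle it by completing the square. First I would record the two algebraic identities that identify \eqref{Normal1} as the normal equation attached to \eqref{lemaK}: from the displayed formulas for $K^{\#}$ and $B_0'^{\#}$ one computes $K^{\#}K=T^{\#}T+\rho V^{\#}V=:W$ and $K^{\#}B_0'=\rho V^{\#}B_0$, so that $W\in L(\HH)^s$ and the normal equation \eqref{Normal1} reads exactly $K^{\#}(KX-B_0')=0$. By Remark \ref{remark1}, the hypothesis $W\in L(\HH)^+$ is precisely the statement that $\ran K$ is a nonnegative subspace of $(\HH\times\HH,\K{\cdot}{\cdot}_\rho)$.

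The central computation is the expansion, valid for every $X_0,Y\in L(\HH)$ with $X=X_0+Y$,
\begin{equation*}
(KX-B_0')^{\#}(KX-B_0')=(KX_0-B_0')^{\#}(KX_0-B_0')+C^{\#}Y+Y^{\#}C+Y^{\#}WY,
\end{equation*}
where $C:=K^{\#}(KX_0-B_0')$; here I use $(KX_0-B_0')^{\#}K=C^{\#}$ together with $K^{\#}K=W$. This identity reduces the whole statement to understanding when the first-order term $C^{\#}Y+Y^{\#}C$ and the second-order term $Y^{\#}WY$ force the right-hand side to dominate $(KX_0-B_0')^{\#}(KX_0-B_0')$ in the order induced by the positive cone. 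This is the operator analogue of the equivalence in Theorem \ref{sol global sii A-inversa} applied to $A=K$.

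For sufficiency, if $W\in L(\HH)^+$ and $X_0$ solves \eqref{Normal1}, then $C=0$ annihilates the cross terms while $Y^{\#}WY$ is positive for every $Y$ (since $\K{W(Yh)}{Yh}\geq 0$), so the identity yields $(KX-B_0')^{\#}(KX-B_0')\geq(KX_0-B_0')^{\#}(KX_0-B_0')$ for all $X$, and $X_0$ realizes the minimum. For necessity, assuming $X_0$ attains the minimum, the identity forces $C^{\#}Y+Y^{\#}C+Y^{\#}WY\geq 0$ for every $Y$. The decisive step, and the one I expect to need the most care, is to separate the first- and second-order information: replacing $Y$ by $tY$ and testing against $h\in\HH$ gives a real scalar inequality $at^2+bt\geq 0$ for all $t\in\RR$, with $a=\K{W(Yh)}{Yh}$ and $b=2\real\K{Ch}{Yh}$, which forces $b=0$ and $a\geq 0$. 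Letting $Yh$ range over $\HH$ yields $\K{Wg}{g}\geq 0$ for all $g$, i.e. $W\in L(\HH)^+$, and $\real\K{Ch}{g}=0$ for all $g$, whence (replacing $g$ by $ig$ and using nondegeneracy of $\K{\cdot}{\cdot}$) $Ch=0$ for every $h$, that is $C=0$, which is exactly \eqref{Normal1}. The main obstacle is handling the indefinite inner product correctly throughout this perturbation argument: ensuring the scalar coefficients $a,b$ are real (guaranteed by the $\#$-selfadjointness of $W$ and of $C^{\#}Y+Y^{\#}C$) and invoking nondegeneracy of the Krein metric to pass from $\K{Ch}{g}=0$ for all $g$ to $Ch=0$.
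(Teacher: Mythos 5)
Your proposal is correct, but it takes a different route from the paper's. The paper's proof is a two-line reduction: it observes that \eqref{lemaK} holds if and only if $X_0x$ is an indefinite least squares solution of $Kz=B_0'x$ for every $x\in\HH$ (evaluate the operator inequality at $x$, and conversely, given $x\neq 0$ and $z$, choose $X$ with $Xx=z$), and then invokes Proposition \ref{propWLSS} with $A=K$ and weight $I$ on $(\HH\times\HH,\K{\cdot}{\cdot}_\rho)$, together with Remark \ref{remark1} and the same two identities you record, $K^{\#}K=T^{\#}T+\rho V^{\#}V$ and $K^{\#}B_0'=\rho V^{\#}B_0$. You never use Proposition \ref{propWLSS}; instead you re-derive the characterization directly at the operator level via the completing-the-square identity and the scalar perturbation argument (replace $Y$ by $tY$, force the linear coefficient $2\real\K{Ch}{Yh}$ to vanish, then pass to $ig$ and use nondegeneracy to get $C=0$). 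This is in effect a self-contained operator-level re-proof of the pointwise characterization that the paper imports from Bogn\'ar and Giribet et al., and it is exactly the argument pattern the paper itself deploys later in the proof of Proposition \ref{MitraKrein}. What the paper's route buys is brevity, by reusing machinery already established in Section \ref{section3}; what yours buys is self-containedness and an explicit display of the first-order (normal equation) and second-order (positivity of $Y^{\#}WY$) conditions, at the cost of redoing work the paper can cite. Both arguments are complete and both directions of the equivalence are handled correctly in your version.
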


\begin{proof} It can be checked that \eqref{lemaK} holds if and only if $X_0x$  is an indefinite least squares solution of the equation $Kz=B_0'x$ for every $x \in \HH,$  if and only if, by Proposition \ref{propWLSS}, $X_0x$ is a solution of $K^{\#}(Ky-B_0'x)=0$ for every $x \in \HH$  and $\ran K$ is nonnegative, or equivalently $X_0$ is a solution of \eqref{Normal1} and, by Remark \ref{remark1}, $T^{\#}T+\rho V^{\#}V \in L(\HH)^+.$ 
%
%
\end{proof}

\begin{prop} \label{teo3} Let $T, V, B_0 \in S_1$ such that $T^{\#}T+\rho V^{\#}V \in L(\HH)^+.$ Then the following are equivalent:
	\begin{enumerate}
		\item [i)] there exists $\underset{X \in L(\HH)}{\min} \ [\tr_{J}((TX)^{\#}TX)+ \rho \tr_{J}((VX-B_0)^{\#}(VX-B_0))]$ for any signature operator $J$;
		\item [ii)] the normal equation $(T^{\#}T+\rho V^{\#}V)X=\rho V^{\#}B_0$ admits a solution.
	\end{enumerate}
\end{prop}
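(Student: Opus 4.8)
The plan is to recognize the objective of \eqref{dos} as a single $J$-trace on the product Krein space $(\HH\times\HH,\K{\cdot}{\cdot}_\rho)$ built from the operators $K$ and $B_0'$ of \eqref{eqK}--\eqref{eqB}, and then to transfer between the scalar ($J$-trace) minimization and the operator-order minimization already controlled by Lemma \ref{teo2}. First I would establish the identity
\[
\tr_{J}((TX)^{\#}TX)+\rho\,\tr_{J}((VX-B_0)^{\#}(VX-B_0))=\tr_{J}((KX-B_0')^{\#}(KX-B_0'))
\]
for every $X\in L(\HH)$ and every signature operator $J$; this is a direct computation from \eqref{eqHH}, the formula for $K$, and the definition of the $J$-trace, noting that $(KX-B_0')^{\#}(KX-B_0')$ is trace class on $\HH$ because $T,V,B_0\in S_1$. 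I would also record $K^{\#}K=T^{\#}T+\rho V^{\#}V$ and $K^{\#}B_0'=\rho V^{\#}B_0$, so that the normal equation \eqref{Normal1} in $ii)$ is exactly $K^{\#}(KX-B_0')=0$.

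For $ii)\Rightarrow i)$, assume \eqref{Normal1} has a solution $X_0$. Since $T^{\#}T+\rho V^{\#}V\in L(\HH)^+$ by hypothesis, Lemma \ref{teo2} gives that $X_0$ solves the operator problem \eqref{lemaK}, that is $(KX_0-B_0')^{\#}(KX_0-B_0')\leq (KX-B_0')^{\#}(KX-B_0')$ in the Krein order for every $X$. The $J$-trace is monotone for this order: if $P\in L(\HH)^s$ is positive then $\tr_{J}(P)=\sum_n\K{Pe_n}{e_n}\geq 0$. Applying $\tr_{J}$ to the previous inequality and using the identity of the first step shows that $X_0$ minimizes the objective of \eqref{dos} for every $J$; hence $i)$ holds.

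For $i)\Rightarrow ii)$, set $F(X):=\tr_{J}((KX-B_0')^{\#}(KX-B_0'))$. Writing $E:=KX-B_0'$ and expanding $F(X+Y)-F(X)$, the cross terms combine, via $\tr_{J}(A^{\#})=\overline{\tr_{J}(A)}$ (see \cite[Lemma 5.4]{Contino33}), into $2\real\,\tr_{J}(E^{\#}KY)$, while the remaining term $\tr_{J}((KY)^{\#}KY)=\tr_{J}(Y^{\#}(T^{\#}T+\rho V^{\#}V)Y)$ is $O(\Vert Y\Vert^2)$ because $T^{\#}T+\rho V^{\#}V\in S_1$; thus $F$ is Fr\'echet differentiable with $DF(X)(Y)=2\real\,\tr_{J}(E^{\#}KY)$. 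At a minimizer $X_0$ (which exists by $i)$) the derivative vanishes; replacing $Y$ by $iY$ yields $\tr_{J}((KX_0-B_0')^{\#}KY)=0$ for all $Y\in L(\HH)$, and the non-degeneracy of the trace pairing between $S_1$ and $L(\HH)$ forces $K^{\#}(KX_0-B_0')=0$, which is \eqref{Normal1}. This is the same mechanism as in \cite[Theorem 5.8]{Contino33}, now applied to $K$ and $B_0'$.

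The main obstacle is this last implication: one must justify the Fr\'echet derivative computation in the indefinite $S_1$ setting and, crucially, pass from the vanishing functional $\tr_{J}((KX_0-B_0')^{\#}KY)=0$ to the operator identity $K^{\#}(KX_0-B_0')=0$ through non-degeneracy of the trace pairing. A secondary point to watch is that the monotonicity of $\tr_{J}$ used in $ii)\Rightarrow i)$ is with respect to the Krein order (not the Hilbert order), which is precisely the order appearing in Lemma \ref{teo2}; this is what makes the two minimization problems interchangeable despite $\tr_{J}$ being built from the indefinite metric.
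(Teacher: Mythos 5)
Your proposal is correct and follows essentially the same route as the paper: for $ii)\Rightarrow i)$ it invokes Lemma \ref{teo2} and then sums the resulting Krein-order inequality over an orthonormal basis (monotonicity of $\tr_J$), and for $i)\Rightarrow ii)$ it uses vanishing of the Fr\'echet derivative of the trace functional at a global minimizer followed by a non-degeneracy argument. The only cosmetic difference is in the last step: the paper resolves the non-degeneracy issue you flag by testing against the single operator $Y_0=JZ_0$, where $Z_0=T^{\#}TX_0+\rho V^{\#}(VX_0-B_0)$, so that $\real\tr_J(Z_0^{\#}JZ_0)=\real\tr(Z_0^{*_J}Z_0)=\Vert Z_0\Vert_2^2=0$ forces $Z_0=0$, which is precisely the instance of the trace-pairing non-degeneracy your argument requires.
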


\begin{proof}  Let $K,$ $B_0'$ be as in \eqref{eqK} and \eqref{eqB}, respectively. Let $J$ be a signature operator of $\HH$ and fix the associated Hilbert space $(\HH, \PI{ \ }{ \ }_J).$
If $X_0$ is a solution of the normal equation \eqref{Normal1}, then by Lemma \ref{teo2}, $$(KX_0-B_0')^{\#}(KX_0-B_0')\leq (KX-B_0')^{\#}(KX-B_0'), \mbox{ for every }  X\in L(\HH).$$ 	
Let $\{e_n : n \in \mathbb{N}\}$ be any orthonormal basis in $(\HH, \PI{ \ }{ \ }_J).$Then, for every $X \in L(\HH)$ and $n \in \mathbb{N},$ 
\begin{align*}
&\K{(KX-B_0')e_n}{(KX-B_0')e_n}_\rho\\
&=\K{(TXe_n,(V-B_0)Xe_n)}{(TXe_n,(V-B_0)Xe_n)}_\rho\\
&=\K{(TX)^{\#}TXe_n}{e_n}+\rho \K{(VX-B_0)^{\#}(VX-B_0)e_n}{e_n}.
\end{align*}
So that, for every $X \in L(\HH)$ and $n \in \mathbb{N},$
\begin{align*}
&\K{(TX_0)^{\#}TX_0e_n}{e_n}+\rho \K{(VX_0-B_0)^{\#}(VX_0-B_0)e_n}{e_n} \\
&\leq \K{(TX)^{\#}TXe_n}{e_n}+\rho \K{(VX-B_0)^{\#}(VX-B_0)e_n}{e_n}.
\end{align*}
Hence
\begin{align*}
&\tr_{J}((TX_0)^{\#}TX_0)+\rho \tr_{J}((VX_0-B_0)^{\#}(VX_0-B_0))\\
& \leq \tr_{J}((TX)^{\#}TX)+\rho \tr_{J}((VX-B_0)^{\#}(VX-B_0)).
\end{align*}
Thus, \eqref{dos} admits a solution.
	
To prove the converse, consider $F: L(\HH) \rightarrow \mathbb{R}^{+},$ 
$$
F(X)=\tr_{J}((TX)^{\#}TX)+\rho \tr_{J}((VX-B_0)^{\#}(VX-B_0)).$$

By \cite[Lemma 5.7]{Contino33}, $F$ has a Fr\'echet derivative, $F \in \mc{C}^1(L(\HH), \mathbb{R})$ and, for every $X, Y \in L(\HH),$ 
$$DF(X)(Y)=2\real\tr_{J}(Y^{\#}(T^{\#}TX+\rho V^{\#}(VX-B_0)).$$
Suppose that $X_0 \in L(\HH)$ is a global minimum  of $\tr_{J}((TX)^{\#}TX)+\rho \tr_{J}((VX-B_0)^{\#}(VX-B_0)).$ Then $X_0$ is a global minimum of $F$ and, since $F$ is a $\mc{C}^1$-function,
	$$DF(X_0)(Y)= 0, \mbox{ for every } Y \in L(\HH).$$
	So that, for every $Y \in L(\HH),$ 
	$$\real\tr_{J}(Y^{\#}(T^{\#}TX_0+\rho V^{\#}(VX_0-B_0))=0.$$

Let $Y_0:=JZ_0 \in L(\HH),$ where $Z_0:=T^{\#}TX_0+\rho V^{\#}(VX_0-B_0).$ Since $J^{\#}=J,$ $$\real \tr_J(Z_0^{\#}JZ_0)=\real \tr(JZ_0^{\#}JZ_0)=\real \tr(Z_0^{*_J}Z_0)=0.$$ Therefore $Z_0=0,$ or
$(T^{\#}T+\rho V^{\#}V)X_0=\rho V^{\#}B_0.$
\end{proof}

In  \cite{[Mit]}, Mitra defined the optimal inverses for matrices in order to  study the existence of solutions of inconsistent linear systems under seminorms defined by positive semidefinite matrices.  Mitra's concept was extended to Hilbert spaces in \cite{[CorFonMae16]}. Here we extend it to the Krein space setting:

\begin{Def}
	Given operators $A \in L(\HH)$ and $W\in L(\HH\times \HH)^s$ an \emph{indefinite $W$-optimal inverse} of $A$ is an operator $G \in L(\HH)$ such that 
	$$\K{W\begin{pmatrix}
			Gh \\ AGh-h
	\end{pmatrix}}{\begin{pmatrix}
	Gh \\ AGh-h
\end{pmatrix}}_\rho= \underset{x \in \HH}{\min} \ \K{W\begin{pmatrix}
x \\ Ax-h
\end{pmatrix}}{\begin{pmatrix}
x \\ Ax-h
\end{pmatrix}}_\rho,$$ for every $h \in \HH.$ 
\end{Def}

Consider  $W$ with the following block form in $\HH \times \HH,$
\begin{equation} \label{AblockformB}
	\left( \begin{array}{cc} 
		W_{11} & W_{12} \\
		W_{12}^{\#} & W_{22}\\
	\end{array}
	\right),
\end{equation}
where $W_{11}, W_{22} \in L(\HH)^s$ and $W_{12} \in L(\HH).$ 

\begin{prop}[{c.f.  \cite[Theorem 2.1]{[CorFonMae16]}, \cite[Theorem 4.2]{[Mit]}}] \label{MitraKrein} Let $A \in L(\HH)$ and $W\in L(\HH\times \HH)^s$ with matrix decomposition \eqref{AblockformB}. 
Then $A \in L(\HH)$ admits an indefinite $W$-optimal inverse if and only if $W_{11}+W_{12}A+\rho A^{\#}(W_{12}^{\#}+W_{22}A) \in L(\HH)^+$ and the equation
\begin{equation} \label{AoptinvB}
(W_{11}+W_{12}A+\rho A^{\#}W_{12}^{\#}+\rho A^{\#} W_{22}A)X=W_{12}+\rho A^{\#}W_{22}
\end{equation}
admits a solution. In this case, the set of indefinite $W$-optimal inverses of $A$ is the set of solutions of \eqref{AoptinvB}.
\end{prop}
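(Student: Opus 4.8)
The plan is to recast the problem of finding an indefinite $W$-optimal inverse of $A$ as a single indefinite weighted least squares problem on the product Krein space $(\HH\times\HH,\K{\cdot}{\cdot}_\rho)$, so that Proposition \ref{propWLSS} applies directly. First I would introduce the operators $B,E\in L(\HH,\HH\times\HH)$ defined by $Bx=(x,Ax)$ and $Eh=(0,h)$, and observe that for all $x,h\in\HH$ one has $Bx-Eh=(x,Ax-h)$. Consequently the quantity minimized in the definition of an indefinite $W$-optimal inverse is exactly $\K{W(Bx-Eh)}{Bx-Eh}_\rho$, and therefore $G\in L(\HH)$ is a $W$-optimal inverse of $A$ if and only if, for every $h\in\HH$, the vector $Gh$ is a $W$-ILSS of the equation $Bz=Eh$.

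Next I would compute the relevant $\#$-adjoints and compositions, exactly as was done for the operators $K$ and $B_0'$ in the smoothing subsection. A direct calculation gives $B^{\#}(h_1,h_2)=h_1+\rho A^{\#}h_2$, and hence, using the block form \eqref{AblockformB} of $W$,
\begin{align*}
B^{\#}WB&=W_{11}+W_{12}A+\rho A^{\#}W_{12}^{\#}+\rho A^{\#}W_{22}A,\\
B^{\#}WE&=W_{12}+\rho A^{\#}W_{22}.
\end{align*}
These identities identify $B^{\#}WB$ with the coefficient on the left-hand side of \eqref{AoptinvB} and $B^{\#}WE$ with its right-hand side; note that $B^{\#}WB$ is automatically $\#$-selfadjoint, since $W\in L(\HH\times\HH)^s$.

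With these identifications in hand, I would apply Proposition \ref{propWLSS} pointwise in $h$. By that result, $Gh$ is a $W$-ILSS of $Bz=Eh$ for every $h$ if and only if $\ran B$ is $W$-nonnegative and $B^{\#}W(BGh-Eh)=0$ for every $h$; the latter is precisely the operator identity $B^{\#}WBG=B^{\#}WE$, i.e.\ $G$ solves \eqref{AoptinvB}. Moreover $\ran B$ is $W$-nonnegative exactly when $\K{B^{\#}WBx}{x}\geq 0$ for all $x$, that is, when $W_{11}+W_{12}A+\rho A^{\#}(W_{12}^{\#}+W_{22}A)=B^{\#}WB\in L(\HH)^+$. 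Since the existence of even a single $W$-ILSS already forces $\ran B$ to be $W$-nonnegative, the two conditions are jointly necessary and sufficient for the existence of a $W$-optimal inverse, and in that case the set of $W$-optimal inverses coincides with the solution set of \eqref{AoptinvB}.

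I do not anticipate a serious difficulty: the whole content lies in the reformulation, after which everything reduces to Proposition \ref{propWLSS}. The one point demanding care is the bookkeeping of the $\K{\cdot}{\cdot}_\rho$-adjoint of $B$ and the verification that the block form \eqref{AblockformB} produces exactly the operator appearing in \eqref{AoptinvB}; in particular one must track the weight $\rho$ in the second coordinate consistently throughout the adjoint computations.
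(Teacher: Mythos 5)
Your reduction is correct and arrives at exactly the paper's conditions, but it takes a genuinely different route from the paper's own proof. The paper argues variationally and by hand: it perturbs the putative minimizer along directions $(z,Az)$, expands the quadratic in a real parameter $t$ (and then in $it$ to kill the imaginary part), invokes the standard argument to make the linear coefficient vanish, and reads off both the orthogonality relations that become \eqref{AoptinvB} and the positivity of the coefficient of $t^2$; sufficiency is then verified by expanding the square. You instead package all of that into Proposition \ref{propWLSS}, by recasting the optimal-inverse problem as the single indefinite least squares problem $Bz=Eh$ in $(\HH\times\HH,\K{\cdot}{\cdot}_\rho)$ with $Bx=(x,Ax)$, $Eh=(0,h)$, and then identifying $B^{\#}WB$ and $B^{\#}WE$ with the two sides of \eqref{AoptinvB}; your computations of $B^{\#}$, $B^{\#}WB$ and $B^{\#}WE$ are exactly right. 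This is precisely the device the paper itself uses for the smoothing problem (Lemma \ref{teo2}, via $K$ and $B_0'$), so your proof is shorter, reuses the machinery of Section \ref{section3}, and makes the link between optimal inverses and indefinite least squares transparent; the paper's direct argument is self-contained but in effect re-proves Proposition \ref{propWLSS} in this special case.

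One caveat, which you inherit from the paper rather than introduce: Proposition \ref{propWLSS} requires the weight to be selfadjoint in the ambient Krein space, here $(\HH\times\HH,\K{\cdot}{\cdot}_\rho)$. This reading of $L(\HH\times\HH)^s$ is in fact forced by the definition of optimal inverse (otherwise the minimized quantity $\K{W(Bx-Eh)}{Bx-Eh}_\rho$ need not be real), but for $\rho\neq 1$ it is in tension with the literal block form \eqref{AblockformB}: a $\K{\cdot}{\cdot}_\rho$-selfadjoint operator with $(1,2)$ entry $W_{12}$ has $(2,1)$ entry $\rho^{-1}W_{12}^{\#}$, not $W_{12}^{\#}$, so your remark that $B^{\#}WB$ is ``automatically'' $\#$-selfadjoint holds only under that reading, and the literal block computation then changes the term $\rho A^{\#}W_{12}^{\#}$ into $A^{\#}W_{12}^{\#}$. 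The paper's proof relies on the same Hermitian property when it writes the cross term as $2t\real\K{W(\cdot)}{(\cdot)}_\rho$, so the two arguments stand or fall together; and in the paper's only application, Theorem \ref{cor31}, one has $W_{12}=0$ and the issue disappears.
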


\begin{proof} Suppose that $G \in L(\HH)$ is an indefinite $W$-optimal inverse of $A$ then, for every $h, x \in \HH,$
\begin{align*}
&\K{W\begin{pmatrix}
		Gh \\ AGh-h
\end{pmatrix}}{\begin{pmatrix}
		Gh \\ AGh-h
\end{pmatrix}}_\rho\\
&\leq \K{W\left[\begin{pmatrix} Gh \\ AGh-h
	\end{pmatrix}+ \begin{pmatrix}
	x-Gh \\ A(x-Gh)
\end{pmatrix}\right]}{\begin{pmatrix} Gh \\ AGh-h
\end{pmatrix}+ \begin{pmatrix}
x-Gh \\ A(x-Gh)
\end{pmatrix}}_\rho,
\end{align*} or equivalently, 
\begin{align*}
\!\!\!&\K{W\begin{pmatrix}
		Gh \\ AGh-h
\end{pmatrix}\!}{\!\begin{pmatrix}
		Gh \\ AGh-h
\end{pmatrix}}_\rho \! \!\!\leq\!\! \K{W\!\left[\begin{pmatrix} Gh\\ AGh-h
		\end{pmatrix}+t \begin{pmatrix} z \\ Az
	\end{pmatrix}\right]\!}{\!\begin{pmatrix} Gh\\ AGh-h
\end{pmatrix}+t \begin{pmatrix} z \\ Az
\end{pmatrix}}_\rho\\
&=\K{W\!\begin{pmatrix}
			Gh \\ AGh-h
	\end{pmatrix}}{\begin{pmatrix}
			Gh \\ AGh-h
	\end{pmatrix}}_\rho+t^2\K{W\begin{pmatrix}
	z \\ Az
\end{pmatrix}}{\begin{pmatrix}
z\\ Az
\end{pmatrix}}_\rho\\
&+ 2 t \real\K{W\begin{pmatrix}
Gh \\ AGh-h
\end{pmatrix}}{\begin{pmatrix}
z \\ Az
\end{pmatrix}}_\rho
\end{align*}
for every $h, z \in \HH$ and $t \in \mathbb{R}.$ Therefore
$$0 \leq t^2\K{W\begin{pmatrix}
		z \\ Az
\end{pmatrix}}{\begin{pmatrix}
		z\\ Az
\end{pmatrix}}_\rho+ 2 t \real\K{W\begin{pmatrix}
		Gh \\ AGh-h
\end{pmatrix}}{\begin{pmatrix}
		z \\ Az
\end{pmatrix}}_\rho,$$
for every $h, z \in \HH$ and $t \in \mathbb{R}.$ 

A standard argument shows that $\real\K{W\begin{pmatrix}
		Gh \\ AGh-h
\end{pmatrix}}{\begin{pmatrix}
		z \\ Az
\end{pmatrix}}_\rho=0,$ for every $h, z \in \HH.$ 

In a similar way if $s:=it,$ $t \in \mathbb{R},$ it follows that $\im \K{W\begin{pmatrix}
Gh \\ AGh-h
\end{pmatrix}}{\begin{pmatrix}
z \\ Az
\end{pmatrix}}_\rho=0,$ for every $h, z \in \HH.$ 
Then
\begin{align*}
	0&=\K{W\begin{pmatrix}
			Gh \\ AGh-h
	\end{pmatrix}}{\begin{pmatrix}
			z \\ Az
	\end{pmatrix}}_\rho=\K{\begin{pmatrix}
			W_{11}Gh+W_{12} (AGh-h)\\
			W_{12}^{\#}Gh+W_{22}(AGh-h)
		\end{pmatrix}
	}{\begin{pmatrix}
			z \\Az
	\end{pmatrix}}_\rho\\
	&=\K{W_{11}Gh+W_{12} (AGh-h)}{z}+\rho \K{W_{12}^{\#}Gh+W_{22}(AGh-h)}{Az}\\
	&=\K{W_{11}Gh+W_{12} (AGh-h)+\rho A^{\#}(W_{12}^{\#}Gh+W_{22}(AGh-h))}{z},
\end{align*}
for every $h, z \in \HH,$ and 
\begin{align*}
0 &\leq \K{W\begin{pmatrix}
		z \\ Az
\end{pmatrix}}{\begin{pmatrix}
		z\\ Az
\end{pmatrix}}_\rho=\K{\begin{pmatrix}
W_{11}z+W_{12}Az\\
W_{12}^{\#}z+W_{22}Az
\end{pmatrix}}{\begin{pmatrix}
z\\ Az
\end{pmatrix}}_\rho\\
&=\K{(W_{11}+W_{12}A)z}{z}+\rho\K{A^{\#}(W_{12}^{\#}+W_{22}A)z}{z},
\end{align*}
for every $z \in \HH.$ 
Hence $$W_{11}G+W_{12} (AG-I)+\rho A^{\#} (W_{12}^{\#}G+W_{22}(AG-I))=0,$$ i.e., $G$ is a solution of
\eqref{AoptinvB} and $W_{11}+W_{12}A+\rho A^{\#}(W_{12}^{\#}+W_{22}A) \in L(\HH)^+.$ 
 
 Conversely, if $W_{11}+W_{12}A+\rho A^{\#}(W_{12}^{\#}+W_{22}A) \in L(\HH)^+$ then $\K{W \begin{pmatrix}
z \\ Az
 \end{pmatrix}}{\begin{pmatrix}
 z \\ Az
\end{pmatrix}}_\rho\geq 0$ for every $z \in \HH.$ Also, if $G$ is a solution of \eqref{AoptinvB}, then
$\K{W\begin{pmatrix}
		Gh \\ AGh-h
\end{pmatrix}}{\begin{pmatrix}
		z \\ Az
\end{pmatrix}}_\rho=0,$ for every $h, z \in \HH.$ Therefore, for every $h, x \in \HH,$
\begin{align*}
&\K{W\left[\begin{pmatrix} Gh \\ AGh-h
	\end{pmatrix}+ \begin{pmatrix}
		x-Gh \\ A(x-Gh)
	\end{pmatrix}\right]}{\begin{pmatrix} Gh \\ AGh-h
	\end{pmatrix}+ \begin{pmatrix}
		x-Gh \\ A(x-Gh)
\end{pmatrix}}_\rho \\ &=\K{W\begin{pmatrix}
Gh \\ AGh-h
\end{pmatrix}}{\begin{pmatrix}
Gh \\ AGh-h
\end{pmatrix}}_\rho+\K{W \begin{pmatrix}
			x-Gh \\ A(x-Gh)
	\end{pmatrix}}{\begin{pmatrix}
		x-Gh \\ A(x-Gh)
	\end{pmatrix}}_\rho\\
&+ 2 \real\K{W\begin{pmatrix}
		Gh \\ AGh-h
\end{pmatrix}}{\begin{pmatrix}
	x-Gh \\ A(x-Gh)
\end{pmatrix}}_\rho\\
&\geq\K{W\begin{pmatrix}
		Gh \\ AGh-h
\end{pmatrix}}{\begin{pmatrix}
		Gh \\ AGh-h
\end{pmatrix}}_\rho.
\end{align*}
Then $G$ is an indefinite $W$-optimal inverse of $A.$

In this case, we have established that the set of indefinite $W$-optimal inverses of $A$ is the set of solutions of \eqref{AoptinvB}.
\end{proof}


\begin{thm} \label{cor31}  Let $T, V \in S_1.$ Then the following are equivalent:
	\begin{itemize}
		\item [i)] there exists $\underset{X \in L(\HH)}{\min} \ [\tr_{J}((TX)^{\#}TX)+ \rho \tr_{J}((VX-B_0)^{\#}(VX-B_0))]$ for any signature operator $J$ and for any $B_0 \in S_1,$ and $T^{\#}T+\rho V^{\#}V \in L(\HH)^+;$
		\item [ii)] $\ran V^{\#} \subseteq \ran (T^{\#}T+\rho V^{\#}V)$ and $T^{\#}T+\rho V^{\#}V \in L(\HH)^+;$
		\item [iii)] there exists $\underset{x\in \HH}{\min }(\K{Tx}{Tx}+\rho \K{Vx-h}{Vx-h}),$ for every $h \in \HH;$
		\item[iv)]  $V$ admits an indefinite $\begin{pmatrix} T^{\#}T & 0 \\ 0 & I
		\end{pmatrix}$-optimal inverse;
		\item [v)] there exists a bounded global solution of the indefinite smoothing problem \eqref{CSP}.
	\end{itemize}		
In this case, the $\begin{pmatrix} T^{\#}T & 0 \\ 0 & I
\end{pmatrix}$-optimal inverses of $V$ are the global solution of the indefinite smoothing problem \eqref{CSP}.
\end{thm}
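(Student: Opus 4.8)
The plan is to funnel all five conditions through one reformulation. With the operator $K=(T,V)$ of \eqref{eqK} and the block weight $W=\begin{pmatrix} T^{\#}T & 0\\ 0 & I\end{pmatrix}\in L(\HH\times\HH)^s$, I would first record the identity valid for every $x,h\in\HH$,
$$\K{W\begin{pmatrix} x\\ Vx-h\end{pmatrix}}{\begin{pmatrix} x\\ Vx-h\end{pmatrix}}_\rho=\K{Tx}{Tx}+\rho\K{Vx-h}{Vx-h}=\K{Kx-(0,h)}{Kx-(0,h)}_\rho.$$
Thus minimizing the smoothing functional of \eqref{CSP} over $x$ is exactly finding an indefinite least squares solution of $Kz=(0,h)$ (with weight the identity on $\HH\times\HH$), and an operator $G\in L(\HH)$ is a bounded global solution of \eqref{CSP} if and only if it is an indefinite $W$-optimal inverse of $V$. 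This reading gives $iv)\Leftrightarrow v)$ at once and, combined with the description of the optimal inverses in Proposition \ref{MitraKrein}, it will also yield the final assertion that the $W$-optimal inverses of $V$ are precisely the global solutions of \eqref{CSP}.

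Next I would establish $ii)\Leftrightarrow iii)$ from the identity above together with Proposition \ref{propWLSS} applied to $A=K$. Since $K^{\#}(h_1,h_2)=T^{\#}h_1+\rho V^{\#}h_2$, the normal equation $K^{\#}(Kx_0-(0,h))=0$ reads $(T^{\#}T+\rho V^{\#}V)x_0=\rho V^{\#}h$, and by Remark \ref{remark1} the range $\ran K$ is nonnegative exactly when $T^{\#}T+\rho V^{\#}V\in L(\HH)^+$. Hence the pointwise minimum in $iii)$ exists for every $h\in\HH$ if and only if $T^{\#}T+\rho V^{\#}V\in L(\HH)^+$ and $\rho V^{\#}h\in\ran(T^{\#}T+\rho V^{\#}V)$ for every $h$; since $\rho\ne 0$ the latter is $\ran V^{\#}\subseteq\ran(T^{\#}T+\rho V^{\#}V)$, which is precisely $ii)$.

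For $ii)\Leftrightarrow iv)$ I would instantiate Proposition \ref{MitraKrein} with $A=V$, $W_{11}=T^{\#}T$, $W_{12}=0$, $W_{22}=I$: its positivity requirement becomes $T^{\#}T+\rho V^{\#}V\in L(\HH)^+$, and its equation \eqref{AoptinvB} collapses to $(T^{\#}T+\rho V^{\#}V)X=\rho V^{\#}$, which by Douglas' Lemma \cite{Douglas} is solvable exactly when $\ran V^{\#}\subseteq\ran(T^{\#}T+\rho V^{\#}V)$. For $i)\Leftrightarrow ii)$ I would invoke Proposition \ref{teo3}: assuming $T^{\#}T+\rho V^{\#}V\in L(\HH)^+$, the minimum \eqref{dos} exists for a fixed $B_0$ iff $(T^{\#}T+\rho V^{\#}V)X=\rho V^{\#}B_0$ is solvable iff, by Douglas' Lemma, $\ran(V^{\#}B_0)\subseteq\ran(T^{\#}T+\rho V^{\#}V)$. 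All four equivalences pass through $ii)$, so the five statements are equivalent.

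The step needing real care is passing in $i)\Leftrightarrow ii)$ from ``for every $B_0\in S_1$'' to the single inclusion $\ran V^{\#}\subseteq\ran(T^{\#}T+\rho V^{\#}V)$. One direction is immediate, since $\ran(V^{\#}B_0)\subseteq\ran V^{\#}$. For the other I would test against rank-one operators: fixing $w\in\HH$ and a vector $e$ with $\Vert e\Vert_J=1$, take $B_0\in S_1$ to be $B_0z=\PI{z}{e}_J\,w$, so that $V^{\#}w=V^{\#}B_0e\in\ran(V^{\#}B_0)\subseteq\ran(T^{\#}T+\rho V^{\#}V)$; as every element of $\ran V^{\#}$ is of the form $V^{\#}w$, this yields the desired inclusion. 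The remaining points are routine bookkeeping: checking that each application of Douglas' Lemma uses the correct positive operator $T^{\#}T+\rho V^{\#}V$ and the correct range, and that the ``for any signature operator $J$'' quantifiers in $i)$ and $iv)$ are absorbed, because the normal equations and Proposition \ref{MitraKrein} are $J$-independent.
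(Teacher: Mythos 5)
Your proposal is correct and follows essentially the same route as the paper: all equivalences are funneled through condition $ii)$ using Proposition \ref{propWLSS} applied to $K$, Proposition \ref{teo3}, Proposition \ref{MitraKrein} with the block weight, and Douglas' Lemma, with $iv)\Leftrightarrow v)$ read off directly from the definitions. Your explicit rank-one choice of $B_0$ in $i)\Rightarrow ii)$ merely makes concrete the paper's assertion that every $h_0\in\HH$ lies in the range of some $B_0\in S_1$.
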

\begin{proof} 
	$i) \Leftrightarrow ii)$: Suppose that \eqref{dos} has a minimum for every $B_0 \in S_1$ and $T^{\#}T+\rho V^{\#}V \in L(\HH)^+.$ Then, by Proposition \ref{teo3}, $\ran (V^{\#}B_0) \subseteq \ran (T^{\#}T+\rho V^{\#}V).$ 
	Let $h_0 \in \HH.$ Then, there exists $B_0 \in S_1$ such that $h_0=B_0x,$ for some $x\in \mathcal H$. Therefore
	$$
	V^{\#}h_0=V^{\#}B_0x \in \ran (V^{\#}B_0) \subseteq \ran (T^{\#}T+\rho V^{\#}V)
	$$
	and $ii)$ follows. 
	Conversely, suppose that  $ii)$ holds and consider  $B_0 \in S_1.$ Then  $\ran (V^{\#}B_0) \subseteq \ran (T^{\#}T+\rho V^{\#}V).$ Hence, by Douglas' Lemma there exists a solution of \eqref{Normal1}. Therefore, by Proposition \ref{teo3}, \eqref{dos} has a solution for every $B_0 \in S_1.$ 
	
	$ii) \Leftrightarrow iii)$: Let $K$ be as in \eqref{eqK}. Given $h_0\in \HH$, it holds that 
	$$\underset{x\in \HH}{\min }(\K{Tx}{Tx}+\rho \K{Vx-h_0}{Vx-h_0})=\underset{x \in \HH}{\min}  \K{Kx-(0,h_0)}{Kx-(0,h_0)}_\rho$$ 
	exists if and only if, by Proposition \ref{propWLSS}, $\ran K$ is nonnegative and the normal equation $K^{\#}K x=K^{\#}(0, h_0)$ has a solution for every $h_0 \in \HH$ ; equivalently $V^{\#}h_0=(T^{\#}T+\rho V^{\#}V)x$ has a solution for every $h_0 \in \HH$ and $T^{\#}T+\rho V^{\#}V \in L(\HH)^+.$ Therefore, $\underset{x\in \HH}{\min }(\K{Tx}{Tx}+\rho \K{Vx-h_0}{Vx-h_0})$ exists, for every $h_0\in \mathcal{H}$ if and only if $\ran V^{\#} \subseteq \ran (T^{\#}T+\rho V^{\#}V)$ and $T^{\#}T+\rho V^{\#}V \in L(\HH)^+.$
	
	$ii) \Leftrightarrow iv)$: By Proposition \ref{MitraKrein}, $iv)$ holds if and only if $T^{\#}T+\rho V^{\#}V \in L(\HH)^+$ and the equation $(T^{\#}T+\rho V^{\#}V)X=\rho V^{\#}$ admits a solution, if and only if $ii)$ holds, by Douglas' Lemma.
	
	$iv) \Leftrightarrow v)$: The operator $G \in L(\HH)$ is an indefinite $\begin{pmatrix} T^{\#}T & 0 \\ 0 & I
	\end{pmatrix}$-optimal inverse of $V$ if and only if for every $h \in \HH$,  
\begin{align*}
\K{\begin{pmatrix} T^{\#}T & 0 \\ 0 & I
	\end{pmatrix}\begin{pmatrix}
		Gh \\ VGh-h
\end{pmatrix}}{\begin{pmatrix}
		Gh \\ VGh-h
\end{pmatrix}}_\rho&=\K{TGh}{TGh}+\rho\K{VGh-h}{VGh-h}\\
&\leq \K{\begin{pmatrix} T^{\#}T & 0 \\ 0 & I
	\end{pmatrix}\begin{pmatrix}
		x \\ Vx-h
\end{pmatrix}}{\begin{pmatrix}
		x \\ Vx-h
\end{pmatrix}}_\rho\\
&= \K{Tx}{Tx}+\rho \K{Vx-h}{Vx-h},
\end{align*}
for every $x \in \mathcal H,$ that is, $G$ is a bounded global solution of \eqref{CSP}.

In this case, we have established that the $\begin{pmatrix} T^{\#}T & 0 \\ 0 & I
\end{pmatrix}$-optimal inverses of $V$ are the global solution of the indefinite smoothing problem \eqref{CSP}.
\end{proof}

\subsection*{Acknowledgments} 
M.~Contino was supported by CONICET PIP 11220200102127CO, by María Zambrano Postdoctoral Grant CT33/21 at Universidad Complutense de Madrid financed by the Ministry of Universities with Next Generation EU funds, and by Grant CEX2019-000904-S funded by MCIN/AEI/10.13039/501100011033. 



\end{document}